\documentclass[11pt, reqno]{amsart}

\usepackage[utf8]{inputenc}
\usepackage[T1]{fontenc}
\usepackage{lmodern}
\usepackage[margin=2.5cm]{geometry}
\usepackage{microtype}

\usepackage{amsmath,amssymb,amsthm,mathtools}
\mathtoolsset{showonlyrefs}
\usepackage{braket}
\usepackage{tikz-cd}

\usepackage{graphicx}
\usepackage{booktabs}
\usepackage{xcolor}

\usepackage[colorlinks=true,
            linkcolor=blue,
            citecolor=red,
            urlcolor=blue,
            pdfborder={0 0 0}]{hyperref}

\newtheorem{theorem}{Theorem}[section]
\newtheorem{proposition}[theorem]{Proposition}

\theoremstyle{definition}

\theoremstyle{remark}
\newtheorem{remark}[theorem]{Remark}
\newcommand{\Z}{\mathbb{Z}}
\newcommand{\C}{\mathbb{C}}
\newcommand{\HH}{\mathcal{H}}

\newcommand{\cO}{\mathcal{O}}
\newcommand{\cA}{\mathcal{A}}
\newcommand{\cF}{\mathcal{F}}

\newcommand{\F}{\mathbb{F}}
\newcommand{\qbinom}[2]{\genfrac{[}{]}{0pt}{}{#1}{#2}}

\DeclareMathOperator{\Map}{Map}
\DeclareMathOperator{\Aut}{Aut}

\DeclareMathOperator{\Sp}{Sp}

\DeclareMathOperator{\Ad}{Ad}
\DeclareMathOperator{\GL}{GL}
\DeclareMathOperator{\Irr}{Irr}
\DeclareMathOperator{\Ind}{Ind}

\DeclareMathOperator{\SL}{SL}
\DeclareMathOperator{\Hom}{Hom}

\DeclareMathOperator{\ASp}{ASp}
\DeclareMathOperator{\PsSp}{PsSp}

\DeclareMathOperator{\Rep}{Rep}
\DeclareMathOperator{\id}{id}

\DeclareMathOperator{\Stab}{Stab}
\DeclareMathOperator{\End}{End}
\DeclareMathOperator{\PGL}{PGL}
\DeclareMathOperator{\Tr}{Tr}

\DeclareMathOperator{\Vect}{Vect}

\title[Representation theory of projective Clifford groups]{Representation theory of projective Clifford groups via isocategoricality}

\author{C\'esar Galindo}
\address{Departamento de Matem\'aticas, Universidad de los Andes, Bogot\'a, Colombia}
\email{cn.galindo1116@uniandes.edu.co}

\subjclass[2020]{Primary 20C15, 20C25; Secondary 18M20}
\keywords{Clifford group, affine symplectic group, isocategorical groups,
character table, Weil representation, tensor-power commutant, unitary design}

\date{}

\begin{document}

\begin{abstract}
We prove that the projective Clifford group \(C(A)\) associated with a finite
abelian group \(A\) is isocategorical to the affine symplectic group
\(\ASp(A)=\Sp(V_A)\ltimes\widehat{V_A}\), where
\(V_A=A\oplus\widehat{A}\). This explicit isomorphism of tensor categories gives
common parametrizations of their irreducible characters and conjugacy classes,
under which their character tables agree entry by entry. It also gives an
explicit basis for every tensor-power commutant of the projective Weil
representation, indexed by symplectic orbits of tuples in \(V_A\) whose entries
sum to zero. For cyclic \(A\), we classify these orbits by oriented
two-generated submodules and obtain exact commutant dimensions in every tensor
power.
\end{abstract}

\maketitle

\section{Introduction}\label{sec:intro}

Let \(A\) be a finite abelian group and let \(\HH=\C[A]\) be the Hilbert
space with basis \(\{\ket{b}:b\in A\}\). For
\(a\in A\) and \(\chi\in\widehat{A}=\Hom(A,U(1))\), define
\[
X_a\ket{b}=\ket{a+b},
\qquad
Z_\chi\ket{b}=\chi(b)\ket{b}.
\]
For \(v=(a,\chi)\), write \(W_v:=X_aZ_\chi\). These are the Pauli, or Weyl,
operators attached to \(A\), and their labels form
\(V_A:=A\oplus\widehat{A}\). The associated Heisenberg and projective
Clifford groups are
\[
\begin{aligned}
H(A)&:=\{zX_aZ_\chi:z\in U(1),\ a\in A,\ \chi\in\widehat{A}\},\\
C(A)&:=N_{U(\HH)}(H(A))\big/\bigl(U(1)\cdot I\bigr).
\end{aligned}
\]
Thus Clifford operators are precisely the unitaries whose conjugation
permutes the Weyl operators up to scalar multiples.
When \(A=(\Z/d\Z)^n\), this is the projective Clifford group on \(n\)
qudits, with \(d=2\) giving the projective \(n\)-qubit Clifford group. These
groups occur throughout the stabilizer formalism, fault-tolerant quantum
computation, randomized benchmarking, and the study of finite collections of
unitaries that reproduce low-order moments of Haar measure
\cite{Gottesman1997,Gottesman1998,AaronsonGottesman2004,BravyiKitaev2005,
DankertCleveEmersonLivine2009,KnillBenchmarking2008,
MagesanGambettaEmerson2011}. Allowing arbitrary finite \(A\) gives a uniform
description of cyclic and multi-qudit systems in terms of finite abelian groups
\cite{Appleby2005,Gross2006,HostensDehaeneDeMoor2005}. In the qudit case, this
construction is also the finite analogue of the Weil representation
\cite{Mackey1958,Weil1964}.

The multiplication of Weyl operators is governed by the bicharacter
\[
\beta_A\bigl((a,\chi),(b,\psi)\bigr):=\chi(b),
\qquad
W_vW_w=\beta_A(v,w)W_{v+w}.
\]
Its antisymmetrization
\[
\omega_A(v,w):=\frac{\beta_A(v,w)}{\beta_A(w,v)}
\]
is a nondegenerate alternating bicharacter on \(V_A\). The quotient
\(H(A)/(U(1)\cdot I)\) identifies with \(V_A\) endowed with this symplectic
form.
Conjugation by Clifford operators preserves \(\omega_A\). As explained in
Subsection~\ref{subsec:clifford-extension}, the resulting quotient map gives
the exact sequence
\begin{equation}\label{eq:intro-extension}
1\longrightarrow V_A\longrightarrow C(A)
\longrightarrow \Sp(V_A)\longrightarrow 1.
\end{equation}
We simply call \(C(A)\) the Clifford group. The extension
\eqref{eq:intro-extension} splits exactly when \(4\nmid |A|\)
\cite[Theorem~6.1]{GalindoClifford2026}; in particular, it cannot in general be
treated as a semidirect product. Despite this failure to split,
\(\Rep(C(A))\) is tensor equivalent to the representation category of a split
affine symplectic group.

Let \(\ASp(A):=\Sp(V_A)\ltimes\widehat{V_A}\) be the affine symplectic group.
Theorem~\ref{thm:tensor-equivalence} constructs an explicit isomorphism of
tensor categories
\begin{equation}\label{eq:intro-main-equivalence}
\cF:\Rep(\ASp(A))
\xrightarrow{\ \sim\ }
\Rep(C(A)).
\end{equation}
The normal abelian subgroup \(\widehat{V_A}\subseteq\ASp(A)\) gives every
\(\ASp(A)\)-module a natural \(V_A\)-weight decomposition
\(U=\bigoplus_{v\in V_A}U_v\). The functor \(\cF\) preserves this vector space
and grading. On homogeneous vectors its tensor constraint is
\[
J_{U,W}(\xi_v\otimes\eta_w)
=
\beta_A(v,w)\,\xi_v\otimes\eta_w.
\]
Theorem~\ref{thm:symmetric-equivalence} shows that \(\cF\) becomes symmetric
after twisting the usual symmetry on \(\Rep(\ASp(A))\) by \(\omega_A\). Thus
\(C(A)\) and \(\ASp(A)\) are
isocategorical in the sense of Davydov and Etingof--Gelaki
\cite{Davydov2001,EtingofGelaki,GalindoIsocategoricalWeil}. This already
implies that their ordinary character tables agree up to relabeling; one of
our goals is to make that relabeling explicit.

The remainder of the paper develops two families of consequences of this
construction.

\smallskip
\noindent\emph{Representations and character theory.}
By Theorem~\ref{thm:clifford-classification}, Mackey's little-group method
applied to the affine symplectic group \(\ASp(A)\), and transported by \(\cF\),
parametrizes the irreducible representations of \(C(A)\) by a symplectic orbit
in \(V_A\) and an ordinary irreducible character of its stabilizer.
Theorem~\ref{thm:inertia-character-extension}
strengthens this description: every character of the kernel \(V_A\) extends
to its inertia subgroup, namely its stabilizer inside \(C(A)\). Consequently,
Mackey's construction uses ordinary representations of the stabilizer rather
than projective ones, even when \eqref{eq:intro-extension} does not split. For
elementary abelian \(2\)-groups, this proves the Basheer--Moori conjecture in
Clifford--Fischer theory \cite[Section~5.2]{BasheerMooriSurvey2015}; see
Theorem~\ref{thm:basheer-moori-conjecture}.

For multiqubit Clifford groups, Helsen, Wallman, and Wehner determined the
irreducible constituents of the two-copy representation, while Lee, Yu, Peng,
and Lai computed the full character tables for \(n=1,2,3\) using presentations
and computer algebra and applied them to tensor-power decompositions
\cite{HelsenWallmanWehner2018,LeeYuPengLai2025}. Our results instead give
uniform row and column parameters for Clifford groups associated with arbitrary
finite abelian groups.

The conjugacy classes admit a description independent of any choice of section
of \eqref{eq:intro-extension}. Over a symplectic element \(T\), they are
controlled by centralizer orbits of characters of the fixed-point subgroup
\(\ker(T-\id)\); see
Theorem~\ref{thm:intrinsic-clifford-conjugacy}. These are also the column
parameters for \(\ASp(A)\). Combined with the common row parameters,
Theorem~\ref{thm:common-character-table} gives an entry-by-entry identification
of the two character tables using only symplectic orbits, fixed-point
subgroups, and ordinary character tables of stabilizers. We work this out for
\(A=\Z/2^k\Z\) and \(A=(\Z/2\Z)^m\) in
Subsections~\ref{subsec:cyclic-2power-example} and
\ref{subsec:elementary-2group-example}, respectively.

\smallskip
\noindent\emph{The Weyl algebra and tensor-power commutants.}
The Weyl operators span an algebra \(\cA\simeq\End_\C(\HH)\), identified with
the twisted group algebra \(\C_{\beta_A}[V_A]\).
Theorem~\ref{thm:weyl-transport} shows that the same tensor structure relates
\(\cA\) to the ordinary group algebra \(\C[V_A]\). The projective Weil
representation of \(C(A)\) on \(\HH\) is constructed in
Subsection~\ref{subsec:twisted-weyl-algebra}. Let \(\mathfrak D_t(A)\) be the
commutant of its \(t\)-th tensor power on \(\HH^{\otimes t}\). If
\[
Z_t(A):=\{(v_1,\ldots,v_t)\in V_A^t:v_1+\cdots+v_t=0\},
\]
then Theorem~\ref{thm:tensor-power-commutant} constructs an orthogonal orbit
basis and gives
\[
\dim\mathfrak D_t(A)=|\Sp(V_A)\backslash Z_t(A)|.
\]
Proposition~\ref{prop:tensor-commutant-product} gives an explicit multiplication
rule. In the usual prime-dimensional qudit setting, related tensor-power
commutants have been studied in
\cite{GrossNezamiWalter2021,MontealegreMoraGross2025,BittelEtAl2026}; the
present description is uniform for arbitrary finite abelian \(A\).

When \(A=\Z/N\Z\) is cyclic, we evaluate the orbit problem completely.
Theorem~\ref{thm:oriented-submodule-classification} classifies the
relevant orbits by two-generated submodules equipped with a generator of their
second exterior power. Combining this classification with Birkhoff's
subgroup-counting formula gives the exact dimension formula of
Theorem~\ref{thm:cyclic-commutant-dimension}; in particular,
\[
\dim\mathfrak D_2(\Z/N\Z)=\tau(N),
\qquad
\dim\mathfrak D_3(\Z/p^r\Z)
=p^{r-1}\bigl((r+1)p+r\bigr).
\]
Here \(\tau(N)\) denotes the number of positive divisors of \(N\). Finally,
trace duality gives a vector-space
isomorphism between the adjoint-action commutant and \(\mathfrak D_{2t}(A)\);
see Proposition~\ref{prop:trace-doubling}.

The paper is organized as follows. Section~\ref{sec:prelim} reviews the
Heisenberg group associated with \(A\), the symplectic structure on \(V_A\),
and the description of \(C(A)\) by pairs \((T,\lambda)\). It also introduces
the affine symplectic group \(\ASp(A)\). Section~
\ref{sec:symmetric-tensor-equivalence} constructs the tensor isomorphism
between \(\Rep(\ASp(A))\) and \(\Rep(C(A))\) and determines the symmetry
transported by this isomorphism. Section~\ref{sec:clifford-reps} applies the
tensor isomorphism to irreducible representations and conjugacy classes,
proves that every kernel character extends to its inertia subgroup, and
derives the common character formula. It also treats the cyclic \(2\)-power
and elementary abelian \(2\)-group cases, including the Basheer--Moori
conjecture. Finally, Section~\ref{sec:weil-algebra} studies the Weyl algebra
and the projective Weil representation, constructs orbit bases for their
tensor-power commutants, solves the orbit problem when \(A\) is cyclic, and
relates tensor-power and adjoint-action commutants by trace duality.

\subsection*{Acknowledgements}
The author was partially supported by Grant INV-2025-213-3452 from the School
of Science of Universidad de los Andes. The author collaborated with ChatGPT
5.6 during the preparation of this revised version of the paper.
\section{Preliminaries}\label{sec:prelim}

Here we fix the notation used throughout the paper. Finite abelian groups are
written additively and their
characters multiplicatively. Let \(A\) be a finite abelian group,
\[
\widehat{A}:=\Hom(A,U(1)),
\]
and
\[
V_A:=A\oplus\widehat{A}.
\]
All representations are finite-dimensional over \(\C\), unless explicitly
stated otherwise.

\subsection{The Heisenberg subgroup and the symplectic structure}\label{subsec:heisenberg-symplectic}

Let
\[
\HH=\C[A]
\]
with basis \(\{\ket{a}:a\in A\}\), equipped with the Hermitian form for which
this basis is orthonormal. For \(a\in A\), \(\chi\in\widehat{A}\), and
\(u=(a,\chi)\in V_A\), put
\[
X_a\ket{b}=\ket{a+b},\qquad
Z_\chi\ket{b}=\chi(b)\ket{b},\qquad
W_u:=X_aZ_\chi.
\]
The associated Heisenberg bicharacter is
\begin{equation}\label{eq:heisenberg-cocycle}
\beta_A\bigl((a,\chi),(b,\psi)\bigr):=\chi(b),
\end{equation}
and therefore
\[
W_uW_v=\beta_A(u,v)\,W_{u+v}.
\]
The Heisenberg subgroup attached to \(A\) is
\[
H(A):=\{zW_u\mid z\in U(1),\ u\in V_A\}\subset U(\HH).
\]
It is a central extension of \(V_A\) by \(U(1)\), with projective quotient
canonically isomorphic to \(V_A\):
\[
H(A)/(U(1)\cdot I)\cong V_A.
\]
The antisymmetrization of \(\beta_A\) is the canonical symplectic form
\begin{equation}\label{eq:symplectic-form}
\omega_A(u,v):=\beta_A(u,v)\,\beta_A(v,u)^{-1}
=\chi(b)\psi(a)^{-1},
\qquad
u=(a,\chi),\ v=(b,\psi).
\end{equation}
It is alternating and nondegenerate. The commutator in \(H(A)\) is controlled by
\[
W_uW_v=\omega_A(u,v)\,W_vW_u.
\]
Thus
\begin{equation}\label{eq:symplectic-identification}
\vartheta:V_A\xrightarrow{\sim}\widehat{V_A},
\qquad
\vartheta_v(u):=\omega_A(v,u),
\end{equation}
is an isomorphism; we use it to pass between elements of \(V_A\) and characters
of \(V_A\). For \(T\in\Sp(V_A)\), it satisfies
\[
\vartheta_{Tv}=\vartheta_v\circ T^{-1}.
\]

We use two different dualities. Pontryagin duality is the evaluation
isomorphism
\[
\iota:V_A\xrightarrow{\sim}\widehat{\widehat{V_A}},
\qquad
\iota_v(\chi):=\chi(v),
\]
and this is the convention used to label the weights of
\(\widehat{V_A}\)-representations. The map \(\vartheta\), instead, uses the
symplectic form and identifies \(V_A\) with \(\widehat{V_A}\); it is the
convention used to write the kernel of the Clifford extension.

We write
\begin{equation}\label{eq:def-SpVA}
\Sp(V_A):=\{T\in\Aut(V_A)\mid \omega_A(Tu,Tv)=\omega_A(u,v)\text{ for all }u,v\in V_A\},
\end{equation}
and the projective Clifford group is
\begin{equation}\label{eq:def-CA}
C(A):=N_{U(\HH)}(H(A))\big/\,U(1)\cdot I.
\end{equation}

\subsection{The pair description of the Clifford extension}\label{subsec:clifford-extension}

The Clifford group admits a description by pairs. Define the affine
pseudosymplectic group attached to \(\beta_A\) by
\begin{equation}\label{eq:clifford-pair-description}
\PsSp_{\beta_A}(V_A):=
\left\{(T,\lambda)\in\Aut(V_A)\times\Map(V_A,U(1))\;\middle|\;
\frac{\lambda(u+v)}{\lambda(u)\lambda(v)}
=
\frac{\beta_A(Tu,Tv)}{\beta_A(u,v)}
\text{ for all }u,v\in V_A
\right\},
\end{equation}
with product
\[
(T,\lambda)\cdot(S,\mu)=(TS,\lambda^S\mu),\qquad (\lambda^S\mu)(u):=\lambda(Su)\mu(u).
\]
Putting \(u=v=0\) in \eqref{eq:clifford-pair-description} gives
\(\lambda(0)=1\). Moreover, two functions lying over the same \(T\) differ by
a character of \(V_A\).
The displayed condition already forces \(T\in\Sp(V_A)\). Indeed, the left
side is symmetric in \(u\) and \(v\), hence
\[
\frac{\beta_A(Tu,Tv)}{\beta_A(u,v)}
=
\frac{\beta_A(Tv,Tu)}{\beta_A(v,u)}.
\]
Rearranging gives \(\omega_A(Tu,Tv)=\omega_A(u,v)\). Thus the projection to the
first coordinate has image in \(\Sp(V_A)\).

By \cite[Theorem~2.2]{GalindoClifford2026}, the assignment
\[
(T,\lambda)\longmapsto[U],
\]
where \(U\) is any unitary satisfying
\[
UW_uU^\dagger=\lambda(u)W_{Tu}\qquad (u\in V_A),
\]
is an isomorphism
\[
\PsSp_{\beta_A}(V_A)\xrightarrow{\sim} C(A).
\]
Theorem~2.2 of \cite{GalindoClifford2026} also gives surjectivity of the projection to the first coordinate
onto \(\Sp(V_A)\). From now on we identify \(\PsSp_{\beta_A}(V_A)\) with
\(C(A)\) through this isomorphism and continue to write \(C(A)\) for the pair
group. Thus elements of \(C(A)\) are written as pairs \((T,\lambda)\), where
\(\lambda\) need not be a character.

The projection to the first coordinate has kernel
\[
D_C:=\{(\id,\chi):\chi\in\widehat{V_A}\}\cong\widehat{V_A}.
\]
Using \(\vartheta\) to write this kernel as \(V_A\), put
\[
K_v:=(\id,\vartheta_v)
\qquad (v\in V_A).
\]
This is the class of the Weyl operator \(W_v\), since
\[
W_vW_uW_v^{-1}=\omega_A(v,u)W_u.
\]
Then the group \(C(A)\) fits into the exact sequence
\begin{equation}\label{eq:extension}
1\longrightarrow V_A\xrightarrow{\,v\mapsto K_v\,}
C(A)\xrightarrow{\pi}\Sp(V_A)\longrightarrow 1.
\end{equation}
The quotient action on this kernel is the natural action on \(V_A\):
\begin{equation}\label{eq:kernel-equivariance}
(T,\lambda)K_v=K_{Tv}(T,\lambda).
\end{equation}

Choose a normalized set-theoretic section
\[
s:\Sp(V_A)\to C(A),
\qquad
T\mapsto(T,\lambda_T^{(s)}),
\qquad
s(\id)=(\id,1).
\]
Define the factor set \(a_s:\Sp(V_A)\times\Sp(V_A)\to V_A\) by
\begin{equation}\label{eq:obstruction-def}
s(T)s(S)=K_{a_s(T,S)}s(TS).
\end{equation}
In terms of the functions \(\lambda_T^{(s)}\), the defining relation reads
\begin{equation}\label{eq:obstruction-phase-formula}
\omega_A\bigl(a_s(T,S),TSu\bigr)
=
\frac{\lambda_T^{(s)}(Su)\lambda_S^{(s)}(u)}
{\lambda_{TS}^{(s)}(u)}
\qquad (u\in V_A).
\end{equation}
Associativity and \eqref{eq:kernel-equivariance} give
\[
a_s(T,S)+a_s(TS,R)
=
T a_s(S,R)+a_s(T,SR).
\]
Thus \(a_s\) is a normalized \(2\)-cocycle for the natural
\(\Sp(V_A)\)-action on \(V_A\). Its cohomology class is independent of the
normalized section and is the extension class of \eqref{eq:extension}; we
denote it by
\[
[\cO_A]\in H^2(\Sp(V_A),V_A).
\]
The extension \eqref{eq:extension} splits if and only if \([\cO_A]=0\).

The split group used in the tensor isomorphism is the affine symplectic group
\[
\ASp(A):=\Sp(V_A)\ltimes \widehat{V_A}
=
\{(T,\chi)\mid T\in \Sp(V_A),\ \chi\in \widehat{V_A}\},
\]
with multiplication
\[
(T,\chi)(S,\psi):=(TS,\chi^S\psi),
\qquad
\chi^S(v):=\chi(Sv).
\]
The normal factor is written on the right:
\[
(T,\chi)=(T,1)(\id,\chi).
\]
This is the same right-coordinate convention used in the product of \(C(A)\).
Through the symplectic identification
\(\vartheta:V_A\xrightarrow{\sim}\widehat{V_A}\), this is the dual form of the
usual affine symplectic group with translations written by elements of \(V_A\).
\section{The tensor isomorphism and transported symmetry}
\label{sec:symmetric-tensor-equivalence}

For a finite group \(G\), write \(\Rep(G)\) for the category of
finite-dimensional complex representations of \(G\). Two finite groups are
\emph{isocategorical} if their representation categories are equivalent as
\(\C\)-linear tensor categories \cite{Davydov2001,EtingofGelaki}. We construct
an explicit isomorphism of tensor categories
\[
\Rep(\ASp(A))\cong \Rep(C(A)),
\]
and then identify the symmetry on \(\Rep(\ASp(A))\) transported to the ordinary
symmetry on \(\Rep(C(A))\).

\subsection{Representations of the affine symplectic group}
\label{subsec:affine-representations}

Representations of \(\ASp(A)\) are written using the weight decomposition for
its normal abelian subgroup \(\widehat{V_A}\). By Pontryagin duality these
weights are indexed by elements of \(V_A\), and \(\Sp(V_A)\) transports the
indices by its natural action.

Let \(\pi\) be a finite-dimensional representation of
\(\ASp(A)\) on \(U\). Restricting to the normal abelian subgroup
\[
\{(\id,\chi)\mid \chi\in \widehat{V_A}\}
\cong \widehat{V_A},
\]
gives the weight decomposition
\begin{equation}\label{eq:affine-weight-decomposition}
U=\bigoplus_{v\in V_A}U_v,
\qquad
U_v:=\{\xi\in U:\pi(\id,\chi)\xi=\chi(v)\xi
\text{ for all }\chi\in \widehat{V_A}\}.
\end{equation}
Here the label \(v\) denotes the character \(\iota_v\) of
\(\widehat{V_A}\).

Write
\[
\rho(T):=\pi(T,1).
\]
The relation
\[
(\id,\psi)(T,\chi)=(T,\chi)(\id,\psi^T),
\qquad
\psi^T(v):=\psi(Tv),
\]
implies, for \(\xi\in U_v\),
\[
\pi(\id,\psi)\rho(T)\xi
=
\rho(T)\pi(\id,\psi^T)\xi
=
\psi(Tv)\rho(T)\xi.
\]
Thus \(\rho(T)(U_v)\subseteq U_{Tv}\). Hence a representation of
\(\ASp(A)\) is the same as a \(V_A\)-graded vector space
\[
U=\bigoplus_{v\in V_A}U_v
\]
equipped with a representation \(\rho\) of \(\Sp(V_A)\) such that
\[
\rho(T)U_v\subseteq U_{Tv}
\qquad (T\in\Sp(V_A),\ v\in V_A).
\]

In these coordinates the element \((T,\chi)\in \ASp(A)\) acts on
\(U_v\) by
\begin{equation}\label{eq:split-action-graded}
(T,\chi)\cdot \xi=\chi(v)\,\rho(T)\xi,
\qquad \xi\in U_v.
\end{equation}
Conversely, any \(V_A\)-graded vector space with such a compatible
\(\Sp(V_A)\)-action becomes a representation through
\eqref{eq:split-action-graded}. Indeed, for \(\xi\in U_v\),
\[
(T,\chi)\cdot\bigl((S,\psi)\cdot \xi\bigr)
=
\psi(v)\chi(Sv)\rho(TS)\xi
=
(\chi^S\psi)(v)\rho(TS)\xi,
\]
which is exactly the action of \((TS,\chi^S\psi)\).

The tensor product carries the sum grading. If
\[
U=\bigoplus_{v\in V_A}U_v,
\qquad
W=\bigoplus_{w\in V_A}W_w,
\]
then
\begin{equation}\label{eq:affine-tensor-grading}
(U\otimes W)_x=\bigoplus_{v+w=x}U_v\otimes W_w.
\end{equation}
The ordinary symmetry in \(\Rep(\ASp(A))\) is the usual flip,
\begin{equation}\label{eq:affine-usual-symmetry}
\tau_{U,W}(\xi\otimes\eta)=\eta\otimes\xi.
\end{equation}

\subsection{The tensor isomorphism}\label{subsec:F-tensor}

With the graded description fixed, we construct the tensor isomorphism. The
functor keeps the same vector space and grading, but interprets the action
through the pair description of \(C(A)\). Its tensor constraint is defined
using the bicharacter \(\beta_A\).

Let \((U,\rho)\in \Rep(\ASp(A))\). On the same vector space
\(U\), define a \(C(A)\)-action by
\begin{equation}\label{eq:twisted-action}
(T,\lambda)\cdot \xi=\lambda(v)\,\rho(T)\xi,
\qquad \xi\in U_v.
\end{equation}
This gives an object of \(\Rep(C(A))\), denoted \(\cF(U)\). On morphisms
\(\cF\) is the identity.
We give \(\cF\) the tensor structure
\begin{equation}\label{eq:J-def}
J_{U,W}:\cF(U)\otimes\cF(W)\longrightarrow\cF(U\otimes W),
\qquad
J_{U,W}(\xi\otimes\eta):=\beta_A(v,w)\,(\xi\otimes\eta),
\end{equation}
for \(\xi\in U_v\) and \(\eta\in W_w\). The unit map is the identity.

Conversely, let \(X\in \Rep(C(A))\). Restricting the action to
\[
D_C=\{(\id,\chi)\in C(A)\mid \chi\in\widehat{V_A}\}
\]
gives the corresponding weight decomposition
\[
X=\bigoplus_{v\in V_A}X_v,
\qquad
X_v:=\{\xi\in X:(\id,\chi)\cdot \xi=\chi(v)\xi
\text{ for all }\chi\in \widehat{V_A}\}.
\]
The relation
\[
(\id,\chi)(T,\lambda)=(T,\lambda)(\id,\chi^T),
\qquad
\chi^T(v):=\chi(Tv),
\]
shows that \((T,\lambda)\cdot X_v\subseteq X_{Tv}\). Define a representation
\(\rho_X\) of \(\Sp(V_A)\) by
\begin{equation}\label{eq:inverse-rho}
\rho_X(T)\big|_{X_v}
:=
\lambda(v)^{-1}(T,\lambda)\big|_{X_v},
\end{equation}
where \((T,\lambda)\in C(A)\) is any lift of \(T\). This gives an object
\(\cF^{-1}(X)\in \Rep(\ASp(A))\), and on morphisms
\(\cF^{-1}\) is the identity.

Give \(\cF^{-1}\) the tensor structure
\begin{equation}\label{eq:inverse-J-def}
K_{X,Y}:\cF^{-1}(X)\otimes\cF^{-1}(Y)
\longrightarrow\cF^{-1}(X\otimes Y),
\qquad
K_{X,Y}(\xi\otimes\eta)
=\beta_A(v,w)^{-1}\xi\otimes\eta,
\end{equation}
for homogeneous \(\xi\in X_v\) and \(\eta\in Y_w\).
Equivalently,
\[
K_{X,Y}
=\cF^{-1}\bigl(J_{\cF^{-1}(X),\cF^{-1}(Y)}\bigr)^{-1}.
\]

\begin{theorem}\label{thm:tensor-equivalence}
The functor
\[
\cF:\Rep(\ASp(A))\to \Rep(C(A))
\]
with tensor structure \(J\) from \eqref{eq:J-def} is an isomorphism of tensor
categories. Its inverse is \(\cF^{-1}\) with tensor structure \(K\) from
\eqref{eq:inverse-J-def}.
\end{theorem}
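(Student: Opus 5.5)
We verify the statement in four steps: \(\cF\) is well defined on objects and morphisms, \(J\) is a \(C(A)\)-equivariant natural isomorphism satisfying the hexagon (associativity) and unit axioms, \(\cF^{-1}\) is well defined, and the two functors are mutually inverse as tensor functors.

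\emph{Step 1: \(\cF(U)\) is a representation.} For \(\xi\in U_v\) we have \(\rho(S)\xi\in U_{Sv}\). Hence
\[
(T,\lambda)\cdot\bigl((S,\mu)\cdot\xi\bigr)=\lambda(Sv)\mu(v)\,\rho(TS)\xi=(\lambda^S\mu)(v)\,\rho(TS)\xi,
\]
which is the action of \((T,\lambda)(S,\mu)\). The identity \((\id,1)\) acts trivially, since \(\lambda(0)=1\) is not even needed here: \(\lambda\equiv1\). An \(\ASp(A)\)-morphism preserves the weight spaces, because it commutes with \(\widehat{V_A}\). It also commutes with \(\rho\), so it commutes with the twisted action. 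Thus \(\cF\) is a faithful functor that is the identity on morphisms.

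\emph{Step 2: \(J\) is a morphism in \(\Rep(C(A))\).} This is the main computation. Take \(\xi\in U_v\) and \(\eta\in W_w\).
\begin{itemize}
\item In \(\cF(U)\otimes\cF(W)\), the element \((T,\lambda)\) acts on \(\xi\otimes\eta\) by \(\lambda(v)\lambda(w)\,\rho(T)\xi\otimes\rho(T)\eta\).
\item In \(\cF(U\otimes W)\), the vector \(\xi\otimes\eta\) has degree \(v+w\), so \((T,\lambda)\) acts by \(\lambda(v+w)\,\rho(T)\xi\otimes\rho(T)\eta\).
\end{itemize}
Since \(\rho(T)\xi\otimes\rho(T)\eta\) has bidegree \((Tv,Tw)\), equivariance of \(J\) amounts to the identity
\[
\beta_A(Tv,Tw)\,\lambda(v)\lambda(w)=\lambda(v+w)\,\beta_A(v,w).
\]
This is precisely the defining condition \eqref{eq:clifford-pair-description} of the pair group. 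So the cocycle condition on \(\lambda\) is exactly what makes \(J\) equivariant. Naturality of \(J\) is clear, because morphisms preserve the grading and \(J\) is a scalar on each bigraded piece.

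\emph{Step 3: the tensor axioms.} The associativity constraints are the identity maps of vector spaces. The hexagon then reduces, on \(\xi\otimes\eta\otimes\zeta\) of degrees \((u,v,w)\), to
\[
\beta_A(u,v)\,\beta_A(u+v,w)=\beta_A(v,w)\,\beta_A(u,v+w).
\]
This holds because \(\beta_A\) is a bicharacter, and both sides equal \(\beta_A(u,v)\beta_A(u,w)\beta_A(v,w)\). The unit axioms hold because the unit object is concentrated in degree \(0\) and \(\beta_A(0,\cdot)=\beta_A(\cdot,0)=1\). Each \(J_{U,W}\) is invertible, since all values of \(\beta_A\) lie in \(U(1)\).

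\emph{Step 4: \(\cF^{-1}\) is inverse to \(\cF\).}
\begin{itemize}
\item \(\rho_X(T)\) is independent of the chosen lift. Two lifts of \(T\) differ by some \((\id,\chi)\), giving \((T,\lambda\chi)=(T,\lambda)(\id,\chi)\). On \(X_v\) this acts as \(\chi(v)\lambda(v)\,(T,\lambda)\), and this factor is cancelled by the normalization \((\lambda\chi)(v)^{-1}\).
\item \(\rho_X\) is multiplicative. On \(X_v\), using the lift \((T,\lambda)(S,\mu)=(TS,\lambda^S\mu)\) of \(TS\), we get
\[
\rho_X(T)\rho_X(S)=\lambda(Sv)^{-1}\mu(v)^{-1}\,(T,\lambda)(S,\mu)=\rho_X(TS).
\]
\item \(\rho_X(T)\) maps \(X_v\) into \(X_{Tv}\), so \(\cF^{-1}(X)\) is an \(\ASp(A)\)-module.
\item The composites are identities. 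Plugging \eqref{eq:twisted-action} into \eqref{eq:inverse-rho} recovers \(\rho\), and conversely \(\lambda(v)\rho_X(T)=(T,\lambda)\) on \(X_v\). Both functors preserve gradings, so \(\cF\cF^{-1}\) and \(\cF^{-1}\cF\) are literally the identity functors.
\item The tensor structures compose to the identity: \(\beta_A\cdot\beta_A^{-1}=1\). Equivariance of \(K\) follows from Step 2 applied to the inverse scalars, or directly from the displayed formula for \(K\).
\end{itemize}

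Step 2 is the only step that is not formal bookkeeping; the rest follows from the grading and the bicharacter property.
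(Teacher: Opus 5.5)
Your proof is correct and follows essentially the same route as the paper: you check the twisted action against the pair product, identify equivariance of \(J\) with the defining condition of \(C(A)\), reduce coherence to the bicharacter identity for \(\beta_A\), and then use lift-independence and multiplicativity of \(\rho_X\) together with the cancellation \(\beta_A\beta_A^{-1}=1\) to obtain the inverse tensor functor. There are two small slips: \((T,\lambda\chi)\) acts on \(X_v\) as \(\chi(v)\,(T,\lambda)\), not as \(\chi(v)\lambda(v)\,(T,\lambda)\); and for the unit map to be a \(C(A)\)-morphism you do need \(\lambda(0)=1\), which follows from the defining condition at \(u=v=0\).
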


\begin{proof}
First, \eqref{eq:twisted-action} is compatible with multiplication in
\(C(A)\). Indeed, if \(\xi\in U_v\), then
\[
(T,\lambda)\cdot\bigl((S,\mu)\cdot \xi\bigr)
=
\lambda(Sv)\mu(v)\rho(TS)\xi
=
(\lambda^S\mu)(v)\rho(TS)\xi,
\]
which is the action of \((T,\lambda)(S,\mu)\).

For \(\cF^{-1}\), the operator \(\rho_X(T)\) is independent of the chosen lift
of \(T\). If \((T,\lambda')\) is another lift, then
\(\lambda^{-1}\lambda'\in\widehat{V_A}\) and
\[
(T,\lambda')=(T,\lambda)(\id,\lambda^{-1}\lambda').
\]
The last factor acts on \(X_v\) by the scalar
\(\lambda(v)^{-1}\lambda'(v)\). Hence
\[
\lambda'(v)^{-1}(T,\lambda')\big|_{X_v}
=
\lambda(v)^{-1}(T,\lambda)\big|_{X_v}.
\]
Moreover, if \((T,\lambda)\) and \((S,\mu)\) are lifts of \(T\) and \(S\), then
\[
(T,\lambda)(S,\mu)=(TS,\lambda^S\mu),
\qquad
(\lambda^S\mu)(v)=\lambda(Sv)\mu(v),
\]
and for \(\xi\in X_v\),
\[
\rho_X(T)\rho_X(S)\xi
=
\lambda(Sv)^{-1}\mu(v)^{-1}(T,\lambda)(S,\mu)\xi
=
(\lambda^S\mu)(v)^{-1}(TS,\lambda^S\mu)\xi
=
\rho_X(TS)\xi.
\]
Hence \(\rho_X\) is a representation of \(\Sp(V_A)\). A morphism in
\(\Rep(C(A))\) is automatically \(D_C\)-equivariant and commutes with the full
\(C(A)\)-action; hence it preserves the grading and intertwines the operators
\(\rho_X(T)\). Thus \(\cF^{-1}\) is functorial.

For \(X\in \Rep(C(A))\), \(\xi\in X_v\), and \((T,\lambda)\in C(A)\),
\[
\lambda(v)\rho_X(T)\xi=(T,\lambda)\xi,
\]
hence \(\cF\cF^{-1}=\id\). Conversely, if
\((U,\rho)\in \Rep(\ASp(A))\), \(\xi\in U_v\), and
\((T,\lambda)\in C(A)\) is any lift of \(T\), then
\[
\rho_{\cF^{-1}(\cF(U))}(T)\xi
=
\lambda(v)^{-1}(T,\lambda)\cdot \xi
=
\rho(T)\xi,
\]
hence \(\cF^{-1}\cF=\id\).

The maps \(J_{U,W}\) are invertible and natural because morphisms preserve the
homogeneous components. Moreover, \(J_{U,W}\) intertwines the \(C(A)\)-action on
\(\cF(U)\otimes\cF(W)\) with the \(C(A)\)-action on \(\cF(U\otimes W)\):
for \((T,\lambda)\in C(A)\), \(\xi\in U_v\), and \(\eta\in W_w\),
\[
(T,\lambda)\cdot J_{U,W}(\xi\otimes\eta)
=\beta_A(v,w)\,\lambda(v+w)\,\rho(T)\xi\otimes\rho(T)\eta,
\]
while
\[
J_{U,W}\bigl((T,\lambda)\cdot \xi\otimes (T,\lambda)\cdot \eta\bigr)
=\beta_A(Tv,Tw)\,\lambda(v)\lambda(w)\,\rho(T)\xi\otimes\rho(T)\eta.
\]
These agree because the defining condition \eqref{eq:clifford-pair-description}
for the pair \((T,\lambda)\in C(A)\) gives
\[
\beta_A(v,w)\,\lambda(v+w)=\beta_A(Tv,Tw)\,\lambda(v)\lambda(w).
\]
The pentagon axiom for \(J\) holds because \(\beta_A\) is a \(2\)-cocycle on
\(V_A\): for homogeneous \(\xi\in U_v\), \(\eta\in W_w\), and
\(\zeta\in Z_z\),
\[
J_{U\otimes W,Z}\circ(J_{U,W}\otimes\id)
\quad\text{and}\quad
J_{U,W\otimes Z}\circ(\id\otimes J_{W,Z})
\]
both multiply by
\[
\beta_A(v,w)\beta_A(v+w,z)=\beta_A(v,w+z)\beta_A(w,z).
\]
The unit constraint follows from \(\beta_A(0,v)=\beta_A(v,0)=1\) and
\(\lambda(0)=1\). Thus \(J\) is a tensor structure on \(\cF\).
The identity expressing \(K\) as the transported inverse of \(J\) shows that
\(K\) is a tensor structure on \(\cF^{-1}\).

The tensor structures of the two composite functors are identities. Indeed,
on homogeneous tensors,
\[
\cF^{-1}(J_{U,W})\circ K_{\cF(U),\cF(W)}=\id_{U\otimes W},
\qquad
\cF(K_{X,Y})\circ J_{\cF^{-1}(X),\cF^{-1}(Y)}=\id_{X\otimes Y},
\]
because in each composition the factors \(\beta_A(v,w)^{-1}\) and
\(\beta_A(v,w)\) cancel. Consequently, \(K\) is the inverse tensor structure
to \(J\), and \(\cF\) and \(\cF^{-1}\) are inverse tensor functors.
\end{proof}

\begin{remark}
\label{rem:identical-character-tables}
For every finite abelian group \(A\), the tensor isomorphism \(\cF\) implies
that \(C(A)\) and \(\ASp(A)\) have identical ordinary character tables, up to
permutations of rows and columns. The row bijection sends \(\chi_U\) to
\(\chi_{\cF(U)}\). This follows from the induced isomorphism of based
Grothendieck rings. Theorem~\ref{thm:common-character-table} gives explicit
common row and column parameters and an entry-by-entry character formula.
\end{remark}

\subsection{The transported symmetry}\label{subsec:twisted-symmetry}

After the tensor isomorphism is established, the remaining point is which
symmetry on \(\Rep(\ASp(A))\) is carried to the ordinary symmetry on
\(\Rep(C(A))\). The usual flip in \(\Rep(\ASp(A))\) is not the transported
symmetry in general. The transported symmetry is
\begin{equation}\label{eq:transported-symmetry-definition}
c^\omega_{U,W}
:=
J_{W,U}\circ\tau_{\cF(U),\cF(W)}\circ J_{U,W}^{-1}.
\end{equation}
For homogeneous \(\xi\in U_v\) and \(\eta\in W_w\), this is
\begin{equation}\label{eq:omega-braiding}
c^\omega_{U,W}(\xi\otimes\eta)=\omega_A(w,v)\,\eta\otimes\xi.
\end{equation}
Indeed, the scalar is
\[
\frac{\beta_A(w,v)}{\beta_A(v,w)}=\omega_A(w,v).
\]

We denote the resulting symmetric category by
\(\Rep^\omega(\ASp(A))\). Since \(\omega_A\) is an
\(\Sp(V_A)\)-invariant bicharacter, \eqref{eq:omega-braiding} is equivariant
for the \(\Sp(V_A)\)-action; equivariance for \(\widehat{V_A}\) follows because
both sides have weight \(v+w\). Naturality follows because morphisms preserve
the \(V_A\)-grading. The two hexagon identities are exactly the bicharacter
identities in the two variables, and
\[
c^\omega_{W,U}c^\omega_{U,W}=\id
\]
follows from \(\omega_A(w,v)\omega_A(v,w)=1\). Hence it defines a symmetry.

\begin{theorem}\label{thm:symmetric-equivalence}
With the symmetry \eqref{eq:omega-braiding} on \(\Rep(\ASp(A))\), the functor
\[
\cF:\Rep^\omega(\ASp(A))\to \Rep(C(A))
\]
is an isomorphism of symmetric tensor categories.
\end{theorem}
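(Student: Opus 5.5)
By Theorem~\ref{thm:tensor-equivalence}, \(\cF\) with tensor structure \(J\) is already an isomorphism of tensor categories \(\Rep(\ASp(A))\to\Rep(C(A))\). So the plan is to check only two further things. First, that \(c^\omega\) is a genuine symmetry on \(\Rep(\ASp(A))\). Second, that \(\cF\) carries \(c^\omega\) to the ordinary flip \(\tau\) on \(\Rep(C(A))\), i.e.
\[
J_{W,U}\circ\tau_{\cF(U),\cF(W)}=\cF(c^\omega_{U,W})\circ J_{U,W}
\]
for all \(U,W\). Since \(\cF\) is the identity on underlying vector spaces and morphisms, this is an equality of linear maps \(U\otimes W\to W\otimes U\).

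For the first point, I would rely on the paragraph preceding the statement. There, \(c^\omega_{U,W}\) is \emph{defined} as \(J_{W,U}\circ\tau\circ J_{U,W}^{-1}\), and it was checked to be natural, \(\ASp(A)\)-equivariant, to satisfy both hexagons (bicharacter identities for \(\omega_A\)), and to satisfy \(c^\omega_{W,U}c^\omega_{U,W}=\id\). I would briefly recheck equivariance through \eqref{eq:split-action-graded}. For \(\xi\in U_v\), \(\eta\in W_w\), the element \((T,\chi)\) acts on \(\xi\otimes\eta\) by \(\chi(v+w)\rho(T)\xi\otimes\rho(T)\eta\). Applying \(c^\omega\) then yields \(\omega_A(Tw,Tv)=\omega_A(w,v)\), using \(T\in\Sp(V_A)\) and the fact that \(\rho(T)\) maps \(U_v\) to \(U_{Tv}\).

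For the second point, the required identity is the defining formula \eqref{eq:transported-symmetry-definition} rearranged. To make it concrete, I would evaluate both sides on homogeneous \(\xi\otimes\eta\). The left side gives \(\beta_A(w,v)\,\eta\otimes\xi\), since \(\eta\otimes\xi\) has bidegree \((w,v)\). The right side gives \(\omega_A(w,v)\beta_A(v,w)\,\eta\otimes\xi\). These agree because \(\omega_A(w,v)=\beta_A(w,v)/\beta_A(v,w)\) by \eqref{eq:symplectic-form}.

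It follows that \(\cF\) is a braided tensor functor between symmetric categories. Since its inverse \(\cF^{-1}\) from Theorem~\ref{thm:tensor-equivalence} is a tensor inverse, it is automatically braided as well. This is the standard fact that the inverse of a braided tensor isomorphism is braided, obtained by applying \(\cF^{-1}\) to the compatibility square. Hence \(\cF\) is an isomorphism of symmetric tensor categories. I expect no real obstacle here. The only care needed is bookkeeping: the flip reverses the bidegree, so \(J_{W,U}\) is evaluated at \((w,v)\) and not \((v,w)\), and the weight of \(\xi\otimes\eta\) in \(\cF(U\otimes W)\) is \(v+w\), independent of the order.
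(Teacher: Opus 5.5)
Your proposal is correct and follows essentially the same route as the paper. You take the tensor isomorphism from Theorem~\ref{thm:tensor-equivalence} and observe that compatibility with the flip is just the defining formula \eqref{eq:transported-symmetry-definition} rearranged; your explicit check on homogeneous tensors and your remark that the inverse is automatically braided are harmless additions to the paper's one-line argument.
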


\begin{proof}
The category \(\Rep^\omega(\ASp(A))\) has the same underlying
tensor category as \(\Rep(\ASp(A))\). Hence
Theorem~\ref{thm:tensor-equivalence} gives the tensor isomorphism. The
definition \eqref{eq:transported-symmetry-definition} is exactly the
compatibility condition with the ordinary symmetry on \(\Rep(C(A))\).
\end{proof}

Thus the symmetric comparison is
\[
\Rep^\omega(\ASp(A))\cong\Rep(C(A)),
\]
where \(\Rep(C(A))\) has its ordinary symmetry and \(\Rep^\omega(\ASp(A))\)
has the symmetry \eqref{eq:omega-braiding}. With the ordinary flip on
\(\Rep(\ASp(A))\), the tensor isomorphism of
Theorem~\ref{thm:tensor-equivalence} is not symmetric in general.
\section{Irreducible representations and character theory}
\label{sec:clifford-reps}

Using Theorem~\ref{thm:tensor-equivalence}, we classify the irreducible
representations of \(C(A)\), construct extensions of the kernel characters to
their inertia subgroups, and give common row and column parameters
for the character tables of \(C(A)\) and \(\ASp(A)\).

\subsection{Irreducible representations}\label{subsec:clifford-table-rows}

In the semidirect product
\(\ASp(A)=\Sp(V_A)\ltimes\widehat{V_A}\), we use the normal factor
\(\widehat{V_A}\). Its character group is canonically \(V_A\), and the induced
action of \(\Sp(V_A)\) on these characters is the natural action on \(V_A\).

For \(u\in V_A\), set
\[
\Sp(V_A)_u:=\Stab_{\Sp(V_A)}(u),
\qquad
\ASp(A)_u:=\Sp(V_A)_u\ltimes\widehat{V_A}.
\]
If \(\sigma\in\Irr(\Sp(V_A)_u)\), inflate \(\sigma\) to \(\ASp(A)_u\) and twist
it by the character \(u\) of \(\widehat{V_A}\):
\[
\widetilde\sigma_u(T,\chi):=\chi(u)\sigma(T),
\qquad
(T,\chi)\in \ASp(A)_u.
\]
Define
\[
\Pi(u,\sigma):=
\Ind_{\ASp(A)_u}^{\ASp(A)}(\widetilde\sigma_u).
\]

Mackey's little-group theorem \cite{Mackey1958} says that every irreducible representation of
\(\ASp(A)\) is isomorphic to some \(\Pi(u,\sigma)\), and that
\[
\Pi(u,\sigma)\simeq \Pi(u',\sigma')
\]
if and only if there is \(T\in\Sp(V_A)\) such that \(Tu=u'\) and
\(\sigma'\simeq {}^T\sigma\), where
\[
({}^T\sigma)(R):=\sigma(T^{-1}RT),
\qquad R\in\Sp(V_A)_{u'}.
\]
This is the equivalence relation for little-group parameters
\cite{Mackey1958,SerreRep}.
We use the term \emph{affine parameter} for such a pair \((u,\sigma)\),
always modulo this equivalence.

For an affine parameter \((u,\sigma)\), set
\[
\widetilde\Pi(u,\sigma):=\cF(\Pi(u,\sigma))\in\Rep(C(A)).
\]

\begin{theorem}
\label{thm:clifford-classification}
Every irreducible representation of \(C(A)\) is isomorphic to
\(\widetilde\Pi(u,\sigma)\) for a unique affine parameter \((u,\sigma)\). In
particular,
\[
\dim \widetilde\Pi(u,\sigma)
=
[\Sp(V_A):\Sp(V_A)_u]\dim\sigma.
\]
\end{theorem}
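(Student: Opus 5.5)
The plan is to transport Mackey's classification for \(\ASp(A)\) across the tensor isomorphism \(\cF\) of Theorem~\ref{thm:tensor-equivalence}. The only features of \(\cF\) needed are these: it is an equivalence (indeed an isomorphism) of \(\C\)-linear categories, and it is the identity on underlying vector spaces. The tensor structure \(J\) plays no role in this statement.

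First, \(\cF\) is an isomorphism of categories with inverse \(\cF^{-1}\), and it acts as the identity on morphism spaces. Hence it preserves simplicity: an object is simple exactly when its endomorphism algebra is \(\C\) and it is nonzero, because \(\Rep(G)\) is semisimple. The same holds for \(\cF^{-1}\). Moreover \(\cF(U)\simeq\cF(U')\) if and only if \(U\simeq U'\), since \(\Hom_{C(A)}(\cF U,\cF U')=\Hom_{\ASp(A)}(U,U')\) as sets of linear maps. Consequently \(\cF\) induces a bijection between isomorphism classes of irreducible representations of \(\ASp(A)\) and of \(C(A)\).

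Second, I combine this with Mackey's little-group theorem as recalled above. Every irreducible of \(\ASp(A)\) is some \(\Pi(u,\sigma)\), with \(\Pi(u,\sigma)\simeq\Pi(u',\sigma')\) exactly when the affine parameters are equivalent. Let \(X\) be an irreducible of \(C(A)\). Then \(\cF^{-1}(X)\) is irreducible, so \(\cF^{-1}(X)\simeq\Pi(u,\sigma)\) for some parameter \((u,\sigma)\), and therefore \(X=\cF\cF^{-1}(X)\simeq\widetilde\Pi(u,\sigma)\). For uniqueness, suppose \(\widetilde\Pi(u,\sigma)\simeq\widetilde\Pi(u',\sigma')\). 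Applying \(\cF^{-1}\) gives \(\Pi(u,\sigma)\simeq\Pi(u',\sigma')\), so the two parameters are equivalent.

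Third, for the dimension formula, \(\cF\) does not change the underlying vector space. Hence \(\dim\widetilde\Pi(u,\sigma)=\dim\Pi(u,\sigma)=[\ASp(A):\ASp(A)_u]\dim\widetilde\sigma_u\). Since \(\ASp(A)_u=\Sp(V_A)_u\ltimes\widehat{V_A}\), this index equals \([\Sp(V_A):\Sp(V_A)_u]\), and \(\dim\widetilde\sigma_u=\dim\sigma\).

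There is no real obstacle here. The only point needing care is checking that \(\widetilde\sigma_u\) is a well-defined representation of \(\ASp(A)_u\), which Mackey's setup already assumes. This holds because \(\chi^S(u)=\chi(Su)=\chi(u)\) for \(S\in\Sp(V_A)_u\). I would also remark that the classification could be stated intrinsically, without \(\cF\). The restriction of \(\widetilde\Pi(u,\sigma)\) to \(V_A\) is supported on the orbit \(\Sp(V_A)u\) under the weight labelling. This observation is useful later but is not needed for the proof.
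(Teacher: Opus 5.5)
Your proposal is correct and follows the paper's own argument: \(\cF\) is an isomorphism of the underlying abelian categories, so it preserves and reflects irreducibility and isomorphism classes, and Mackey's little-group classification for \(\ASp(A)\) then transfers directly. Your extra details are all sound and only make explicit what the paper leaves implicit: the index computation \([\ASp(A):\ASp(A)_u]=[\Sp(V_A):\Sp(V_A)_u]\) for the dimension formula, and the check that \(\widetilde\sigma_u\) is a representation.
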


\begin{proof}
By Theorem~\ref{thm:tensor-equivalence}, \(\cF\) is an isomorphism of the
underlying abelian categories. Therefore it preserves and reflects
irreducibility and isomorphism classes. Combining this with the little-group
parametrization for \(\ASp(A)\) gives the result.
\end{proof}

The representations \(\widetilde\Pi(u,\sigma)\) also admit a direct induced
realization in \(C(A)\).

For \(u\in V_A\), let \(\chi_u\in\widehat{D_C}\) be given by
\[
\chi_u(\id,\mu):=\mu(u)\qquad ((\id,\mu)\in D_C).
\]
Conjugation by \((T,\lambda)\in C(A)\) sends \(\chi_u\) to the character
indexed by \(Tu\). Hence the inertia subgroup of \(\chi_u\) is
\[
I_u:=\{(T,\lambda)\in C(A):Tu=u\}.
\]
For \(\sigma\in\Irr(\Sp(V_A)_u)\), let \(\bar\sigma\) denote its inflation
along \(I_u\to\Sp(V_A)_u\).

\begin{theorem}
\label{thm:inertia-character-extension}
For every \(u\in V_A\), the character \(\chi_u\) extends canonically to
\(I_u\) by
\[
\widehat\chi_u(T,\lambda)=\lambda(u).
\]
Every associated factor set on \(\Sp(V_A)_u\) is therefore a coboundary.
Moreover, for every \(\sigma\in\Irr(\Sp(V_A)_u)\),
\begin{equation}\label{eq:intrinsic-induced-clifford-row}
\widetilde\Pi(u,\sigma)
\simeq
\Ind_{I_u}^{C(A)}(\widehat\chi_u\otimes\bar\sigma).
\end{equation}
\end{theorem}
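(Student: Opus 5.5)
The plan has three steps: show that \(\widehat\chi_u\) is a character of \(I_u\) extending \(\chi_u\), read off the coboundary statement from that, and then compare \(\widetilde\Pi(u,\sigma)\) with the induced module through the weight decomposition.

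\emph{Step 1: \(\widehat\chi_u\) is a character.} For \((T,\lambda),(S,\mu)\in I_u\) the product is \((TS,\lambda^S\mu)\). Hence
\[
\widehat\chi_u\bigl((T,\lambda)(S,\mu)\bigr)=\lambda(Su)\mu(u)=\lambda(u)\mu(u),
\]
because \(Su=u\). So \(\widehat\chi_u\) is multiplicative. On \(D_C\) it gives \(\widehat\chi_u(\id,\mu)=\mu(u)=\chi_u(\id,\mu)\), so it extends \(\chi_u\). The values have modulus one; this is all a representation needs, and in any case \(\lambda(u)\) is a root of unity, since \(\lambda(nu)=\lambda(u)^n\cdot(\text{phases from }\beta_A)\).

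\emph{Step 2: the factor sets are coboundaries.} This is the standard Clifford-theory fact that an invariant character has trivial obstruction class exactly when it extends to its inertia group. Concretely, choose lifts \(s(T)=(T,\lambda_T)\) for \(T\in\Sp(V_A)_u\). The factor set of the projective representation \(T\mapsto\chi_u\)-part is
\[
\chi_u\bigl(s(T)s(S)s(TS)^{-1}\bigr).
\]
By Step 1 this equals the coboundary of \(T\mapsto\widehat\chi_u(s(T))=\lambda_T(u)\). This gives the claim for every choice of section.

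\emph{Step 3: the induced realization.} I would identify \(\cF(\Pi(u,\sigma))\) with \(\Ind_{I_u}^{C(A)}(\widehat\chi_u\otimes\bar\sigma)\) directly through weights. In \(\Pi(u,\sigma)=\Ind_{\ASp(A)_u}^{\ASp(A)}\widetilde\sigma_u\), the subspace \(1\otimes\sigma\) is exactly the weight space \(\Pi_u\). The full module is \(\bigoplus_{Tu\in\mathcal O}\Pi_{Tu}\) over the orbit \(\mathcal O\) of \(u\), and \(\rho(R)\) acts on \(\Pi_u\) by \(\sigma(R)\) for \(R\in\Sp(V_A)_u\). Applying \(\cF\), an element \((R,\lambda)\in I_u\) acts on \(\Pi_u\) by \(\lambda(u)\sigma(R)=(\widehat\chi_u\otimes\bar\sigma)(R,\lambda)\) by \eqref{eq:twisted-action}. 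So \(\Pi_u\) is an \(I_u\)-submodule of \(\cF(\Pi(u,\sigma))\) isomorphic to \(\widehat\chi_u\otimes\bar\sigma\). Frobenius reciprocity then gives a nonzero \(C(A)\)-map
\[
\Ind_{I_u}^{C(A)}(\widehat\chi_u\otimes\bar\sigma)\to\cF(\Pi(u,\sigma)).
\]
Its image contains the \(C(A)\)-translates of \(\Pi_u\), and these are the weight spaces \(\Pi_{Tu}\), so the map is surjective. Both sides have dimension \([C(A):I_u]\dim\sigma=[\Sp(V_A):\Sp(V_A)_u]\dim\sigma\), using surjectivity of \(\pi\) and Theorem~\ref{thm:clifford-classification}. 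Hence the map is an isomorphism.

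The only delicate point is Step 3's identification of \(\Pi_u\) as precisely the inducing subspace, with \(\rho\) restricting to \(\sigma\) there. This follows from the explicit induced-module description, since the coset representatives \(\Sp(V_A)\) send \(1\otimes\sigma\) to distinct weight spaces.
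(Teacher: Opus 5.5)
Your proposal is correct and follows the paper's proof essentially step for step. You show multiplicativity of \(\widehat\chi_u\) from \(Su=u\), and you get the coboundary by applying \(\widehat\chi_u\) to the section relation. You then identify the \(u\)-weight space with \(\widehat\chi_u\otimes\bar\sigma\), and your Frobenius-reciprocity map attached to the inclusion is exactly the paper's map \(g\otimes e\mapsto g\cdot e\), shown to be an isomorphism by surjectivity and a dimension count.
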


\begin{proof}
If \((T,\lambda),(S,\mu)\in I_u\), then
\[
(\lambda^S\mu)(u)=\lambda(Su)\mu(u)=\lambda(u)\mu(u).
\]
Thus \(\widehat\chi_u\) is a character, and its restriction to \(D_C\) is
\(\chi_u\). This definition is independent of any section: it is the scalar by
which an element of \(I_u\) acts by conjugation on the Weyl line \(\C W_u\).

To describe the factor set, choose a normalized set-theoretic section
\(r:\Sp(V_A)_u\to I_u\) and write
\[
r(S)r(T)=c_u(S,T)r(ST),
\qquad c_u(S,T)\in D_C.
\]
Set
\[
\alpha_u(S,T):=\chi_u(c_u(S,T)),
\qquad
b_u(T):=\widehat\chi_u(r(T)).
\]
Applying \(\widehat\chi_u\) to
\(r(S)r(T)=c_u(S,T)r(ST)\) gives
\[
b_u(S)b_u(T)=\alpha_u(S,T)b_u(ST),
\]
which proves that \(\alpha_u\) is a coboundary.

Let \(E\) be the representation space of \(\sigma\), and use the standard
orbit realization
\[
\widetilde\Pi(u,\sigma)
=\bigoplus_{x\in\Sp(V_A)\cdot u}E_x,
\qquad E_x\simeq E,
\]
normalized at the base point \(u\). Its \(\chi_u\)-weight space is \(E_u\).
In this normalization, if \(g=(T,\lambda)\in I_u\), then \(Tu=u\), and the
action on this weight space is
\[
g\cdot e=\lambda(u)\sigma(T)e,
\qquad e\in E_u.
\]
Thus, as an \(I_u\)-module,
\[
E_u\simeq\widehat\chi_u\otimes\bar\sigma.
\]
There is therefore a natural \(C(A)\)-equivariant map
\[
\Ind_{I_u}^{C(A)}(E_u)
\longrightarrow
\widetilde\Pi(u,\sigma),
\qquad
g\otimes e\longmapsto g\cdot e.
\]
It is surjective because the \(C(A)\)-translates of \(E_u\) are precisely the
weight spaces \(E_x\). Moreover,
\[
[C(A):I_u]\dim E_u
=
[\Sp(V_A):\Sp(V_A)_u]\dim\sigma
=
\dim\widetilde\Pi(u,\sigma).
\]
Hence the map is an isomorphism, proving
\eqref{eq:intrinsic-induced-clifford-row}.
\end{proof}

\subsection{Conjugacy-class parameters}
\label{subsec:clifford-table-columns}

For \(T\in\Sp(V_A)\), let
\[
F_T:=\ker(T-\id)\leq V_A.
\]
Only the values of \(\lambda_g\) on \(F_T\) enter the conjugacy-class
parameters and the character formula. Therefore, for
\(g=(T,\lambda_g)\in C(A)\), set
\[
\ell_g:=\lambda_g|_{F_T}.
\]
Although \(\lambda_g\) need not be a character on \(V_A\),
Theorem~\ref{thm:intrinsic-clifford-conjugacy} shows that \(\ell_g\) is a
character of \(F_T\).

For \(\chi\in\widehat{V_A}\), recall the notation
\(\chi^T(x)=\chi(Tx)\).
For a subgroup \(B\leq V_A\), write
\[
\operatorname{Ann}(B)
:=\{\chi\in\widehat{V_A}:\chi|_B=1\}.
\]

Let
\[
\mathcal X_A
:=
\{(T,\ell):T\in\Sp(V_A),\ \ell\in\widehat{F_T}\}.
\]
The group \(\Sp(V_A)\) acts on \(\mathcal X_A\) by
\[
S\cdot(T,\ell)
:=
\bigl(STS^{-1},\ell\circ S^{-1}\bigr).
\]

\begin{theorem}\label{thm:intrinsic-clifford-conjugacy}
For every \(g=(T,\lambda_g)\in C(A)\), the restriction
\(\ell_g\) belongs to \(\widehat{F_T}\). Moreover, the assignment
\[
(T,\lambda_g)\longmapsto(T,\ell_g)
\]
induces a canonical bijection
\[
C(A)/\!\sim_{\mathrm{conj}}
\xrightarrow{\ \sim\ }
\Sp(V_A)\backslash\mathcal X_A.
\]
\end{theorem}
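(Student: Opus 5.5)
First we check that \(\ell_g\) is a character of \(F_T\). For \(u,v\in F_T\) the defining relation \eqref{eq:clifford-pair-description} gives
\[
\frac{\lambda_g(u+v)}{\lambda_g(u)\lambda_g(v)}=\frac{\beta_A(Tu,Tv)}{\beta_A(u,v)}=1,
\]
because \(Tu=u\) and \(Tv=v\). So \(\ell_g\in\widehat{F_T}\).

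Next we check that the assignment is compatible with conjugation. Take \(h=(S,\mu)\in C(A)\). The inverse of \(h\) is \((S^{-1},\nu)\) with \(\nu(x)=\mu(S^{-1}x)^{-1}\), since \(\nu^{S}\mu=1\). Compute \(hgh^{-1}=(STS^{-1},\kappa)\) directly from the product rule, where
\[
\kappa(x)=\mu(TS^{-1}x)\,\lambda_g(S^{-1}x)\,\mu(S^{-1}x)^{-1}.
\]
For \(x\in F_{STS^{-1}}=S F_T\), put \(y=S^{-1}x\in F_T\). Then \(Ty=y\), the two \(\mu\)-factors cancel, and \(\kappa(x)=\lambda_g(S^{-1}x)\). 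Hence \(\ell_{hgh^{-1}}=\ell_g\circ S^{-1}\), so \(hgh^{-1}\) maps to \(S\cdot(T,\ell_g)\). Therefore the map descends to a well-defined map on conjugacy classes. It is surjective onto orbits: every \(T\) lifts by surjectivity of \(\pi\) (Theorem~2.2 of \cite{GalindoClifford2026}), and every \(\ell\in\widehat{F_T}\) is reached as follows. Lifts of \(T\) differ by characters \(\chi\in\widehat{V_A}\), and \(\chi\) can be chosen to restrict to \(\ell\,\ell_g^{-1}\) on \(F_T\), because restriction \(\widehat{V_A}\to\widehat{F_T}\) is surjective.

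Injectivity is the main step. Suppose \(g=(T,\lambda)\) and \(g'=(T',\lambda')\) have orbit-equivalent images. Conjugating \(g'\) by a lift of the relevant \(S\), we may assume \(T'=T\) and \(\ell_{g'}=\ell_g\). Then \(g'=g\,(\id,\chi)\) with \(\chi=\lambda^{-1}\lambda'\in\widehat{V_A}\) and \(\chi|_{F_T}=1\), that is, \(\chi\in\operatorname{Ann}(F_T)\). The task is to show that \(g\) and \(g(\id,\chi)\) are conjugate, and we do this using elements of the kernel \(D_C\). Conjugating by \((\id,\psi)\), the computation above with \(S=\id\) and \(\mu=\psi\) gives
\[
(\id,\psi)\,g\,(\id,\psi)^{-1}=\bigl(T,\ \lambda\cdot\psi^{T}\psi^{-1}\bigr).
\]
So it suffices to show that the homomorphism \(\psi\mapsto\psi^T\psi^{-1}=\psi\circ(T-\id)\) maps \(\widehat{V_A}\) onto \(\operatorname{Ann}(F_T)\). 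Its image is the set of characters that factor through \(V_A\to(T-\id)V_A\) and are then extended to \(V_A\). By Pontryagin duality this is exactly \(\operatorname{Ann}(\ker\bigl((T-\id)\bigr))=\operatorname{Ann}(F_T)\). In more detail, the image equals \(\widehat{(T-\id)V_A}\) pulled back along \(T-\id\), which is the group of characters of \(V_A/F_T\). The order count \(|\text{image}|=|\widehat{V_A}|/|\ker|=|(T-\id)V_A|=|V_A/F_T|=|\operatorname{Ann}(F_T)|\), together with the evident inclusion of the image in \(\operatorname{Ann}(F_T)\), confirms this. Hence \(g'\) is conjugate to \(g\), and the map is injective.

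I expect the only delicate points to be the bookkeeping of the right-coordinate product convention in the conjugation formula and the duality count. Canonicity is automatic, since no section is chosen anywhere.
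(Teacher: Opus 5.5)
Your proposal is correct and follows essentially the same route as the paper: the same conjugation formula giving \(\ell_{hgh^{-1}}=\ell_g\circ S^{-1}\), surjectivity by extending a character from \(F_T\) to \(V_A\), and injectivity by first matching the symplectic part and then conjugating by \(D_C\) using \(\operatorname{Ann}(F_T)=\{\psi^T\psi^{-1}:\psi\in\widehat{V_A}\}\). The paper proves that last identity by the same order count. The only cosmetic difference is that you also give a direct Pontryagin-duality description of this image, via \(V_A/F_T\cong(T-\id)V_A\).
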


\begin{proof}
We first verify that the assignment is defined and compatible with
conjugation. If \(x,y\in F_T\), the defining relation
\eqref{eq:clifford-pair-description} gives
\[
\frac{\lambda_g(x+y)}{\lambda_g(x)\lambda_g(y)}
=
\frac{\beta_A(Tx,Ty)}{\beta_A(x,y)}
=1.
\]
Thus \(\ell_g\in\widehat{F_T}\). If \(h=(S,\mu)\in C(A)\) and
\(hgh^{-1}=(STS^{-1},\lambda')\), the product in the pair description gives
\[
\lambda'(x)
=
\mu(TS^{-1}x)\lambda_g(S^{-1}x)\mu(S^{-1}x)^{-1}.
\]
For \(x\in F_{STS^{-1}}\), one has \(TS^{-1}x=S^{-1}x\), and hence
\begin{equation}\label{eq:fixed-point-character-conjugation}
\ell_{hgh^{-1}}(x)=\ell_g(S^{-1}x).
\end{equation}

We claim that
\begin{equation}\label{eq:fixed-point-annihilator}
\operatorname{Ann}(F_T)
=
\{\chi^T\chi^{-1}:\chi\in\widehat{V_A}\}.
\end{equation}
The inclusion from right to left is immediate. The kernel of
\[
\widehat{V_A}\longrightarrow\widehat{V_A},
\qquad
\chi\longmapsto\chi^T\chi^{-1},
\]
is \(\operatorname{Ann}((T-\id)V_A)\), which has order
\[
|V_A/(T-\id)V_A|=|F_T|.
\]
The image therefore has order \(|V_A|/|F_T|\), equal to the order of
\(\operatorname{Ann}(F_T)\), proving
\eqref{eq:fixed-point-annihilator}. Conjugation by
\((\id,\chi)\in D_C\) sends \((T,\lambda)\) to
\[
\bigl(T,\chi^T\chi^{-1}\lambda\bigr).
\]
Since two functions over \(T\) differ by a character of \(V_A\), two
elements over the same \(T\) are conjugate by \(D_C\) exactly when their
restrictions to \(F_T\) agree.

We can now prove surjectivity. Fix \(T\in\Sp(V_A)\) and
\(\ell\in\widehat{F_T}\), and choose any
\((T,\lambda_0)\in C(A)\). The character
\[
\ell\,(\lambda_0|_{F_T})^{-1}\in\widehat{F_T}
\]
extends to a character \(\chi\in\widehat{V_A}\), by exactness of Pontryagin
duality for finite abelian groups. Then \((T,\chi\lambda_0)\in C(A)\) has
fixed-point character \(\ell\).

For injectivity, first suppose that two elements lie over the same \(T\) and
have the same fixed-point character. Equation~
\eqref{eq:fixed-point-annihilator} shows that they are conjugate by \(D_C\).
In general, first conjugate the symplectic part by a lift of an appropriate
element of \(\Sp(V_A)\), use \eqref{eq:fixed-point-character-conjugation}, and then
apply the fixed-\(T\) case. This proves the bijection with
\(\Sp(V_A)\backslash\mathcal X_A\).
\end{proof}

After choosing one representative \(T\) from each conjugacy class of
\(\Sp(V_A)\), the canonical parametrization in
Theorem~\ref{thm:intrinsic-clifford-conjugacy} becomes
\begin{equation}\label{eq:intrinsic-column-parametrization}
C(A)/\!\sim_{\mathrm{conj}}
\xrightarrow{\ \sim\ }
\bigsqcup_{[T]\subseteq\Sp(V_A)}
C_{\Sp(V_A)}(T)\backslash\widehat{F_T}.
\end{equation}

\begin{remark}
\label{rem:clifford-class-size}
Let \(g=(T,\lambda_g)\in C(A)\). Then
\[
|C_{C(A)}(g)|
=
|F_T|\,
|\Stab_{C_{\Sp(V_A)}(T)}(\ell_g)|.
\]
Consequently,
\[
|[g]_{C(A)}|
=
\frac{|C(A)|}
{|F_T|\,|\Stab_{C_{\Sp(V_A)}(T)}(\ell_g)|}.
\]
To see this, note that the image in \(\Sp(V_A)\) of an element centralizing
\(g\) belongs to \(C_{\Sp(V_A)}(T)\).
For \(S\in C_{\Sp(V_A)}(T)\), the conjugation calculation and
\eqref{eq:fixed-point-annihilator} in the proof of
Theorem~\ref{thm:intrinsic-clifford-conjugacy} show that a lift of \(S\) can
be modified by an element of \(D_C\) to centralize \(g\) if and only if
\(S\cdot\ell_g=\ell_g\). If one such lift exists, all centralizing lifts over
\(S\) form a coset of
\[
\{(\id,\chi)\in D_C:\chi^T=\chi\}.
\]
This group has order \(|F_T|\), by the same kernel-counting argument used in
the proof of Theorem~\ref{thm:intrinsic-clifford-conjugacy}. The centralizer
and class-size formulas follow.
\end{remark}

\subsection{The common character table}\label{subsec:clifford-characters}

Let \((u,\sigma)\) be an affine parameter. Mackey's restriction formula
\cite[\S7, Proposition~22]{SerreRep} gives the \(\widehat{V_A}\)-weight
decomposition
\[
M(u,\sigma)\simeq \bigoplus_{x\in \Sp(V_A)\cdot u}E_x,
\]
where each \(E_x\) is a copy of the representation space of \(\sigma\), and
\(\widehat{V_A}\) acts on \(E_x\) by the character \(\chi\mapsto\chi(x)\).
For each \(x\in \Sp(V_A)\cdot u\), choose \(T_x\in\Sp(V_A)\) with
\(T_xu=x\).

\begin{theorem}\label{thm:common-character-table}
The irreducible characters of \(C(A)\) and \(\ASp(A)\) have the common row
parameters
\[
\bigsqcup_{[u]\in \Sp(V_A)\backslash V_A}\Irr(\Sp(V_A)_u),
\]
and their conjugacy classes have the common column parameters
\[
\bigsqcup_{[T]\subseteq\Sp(V_A)}
C_{\Sp(V_A)}(T)\backslash\widehat{F_T}.
\]
Under these parameters, the two character tables agree entry by entry. Their
common value in row \((u,\sigma)\) and column \((T,\ell)\) is
\begin{equation}\label{eq:clifford-character}
\chi^{C(A)}_{(u,\sigma)}(T,\ell)
=
\chi^{\ASp(A)}_{(u,\sigma)}(T,\ell)
=
\sum_{x\in \Sp(V_A)\cdot u\cap F_T}
\ell(x)\,
\chi_\sigma(T_x^{-1}TT_x).
\end{equation}
\end{theorem}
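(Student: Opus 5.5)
The row parameters come directly from Mackey's little-group theorem for \(\ASp(A)\) together with Theorem~\ref{thm:clifford-classification}. For the columns on the \(C(A)\) side we use Theorem~\ref{thm:intrinsic-clifford-conjugacy} in the form \eqref{eq:intrinsic-column-parametrization}. For \(\ASp(A)\) we prove the analogous parametrization by the same argument, simply replacing \(\lambda_g\) by a genuine character \(\chi\in\widehat{V_A}\). In \(\ASp(A)\) the element \((T,\chi)\) goes to \((T,\chi|_{F_T})\). The conjugation rule \eqref{eq:fixed-point-character-conjugation} and the annihilator identity \eqref{eq:fixed-point-annihilator} hold verbatim, since \(\ASp(A)\) is the pair group in which every \(\lambda\) is a character. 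This yields the same set \(\Sp(V_A)\backslash\mathcal X_A\), so the column parameters are shared.

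For the values, I would compute traces on the graded model. Let \(g=(T,\lambda)\) act on \(M(u,\sigma)=\bigoplus_{x}E_x\), realized by induction as \(E_x=T_x\cdot E_u\). Because \(g\) maps \(E_x\) to \(E_{Tx}\), only the blocks with \(Tx=x\) contribute to the trace, that is, those with \(x\in\Sp(V_A)\cdot u\cap F_T\). On such a block, by \eqref{eq:split-action-graded} for \(\ASp(A)\) or \eqref{eq:twisted-action} for \(C(A)\), the element acts as \(\lambda(x)\rho(T)\). Here \(\lambda(x)=\ell(x)\) because \(x\in F_T\). In both cases \(\rho\) is the same \(\Sp(V_A)\)-action on the graded space, since \(\cF\) does not change \(\rho\). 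It remains to compute \(\Tr(\rho(T)|_{E_x})\).

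To do this, write \(\rho(T)|_{E_x}=\rho(T_x)\rho(T_x^{-1}TT_x)|_{E_u}\rho(T_x)^{-1}\), where \(T_x^{-1}TT_x\in\Sp(V_A)_u\). On \(E_u\) the stabilizer acts through \(\rho\) by \(\sigma\), because \(\widetilde\sigma_u(T,1)=\sigma(T)\). The trace on this block is therefore \(\chi_\sigma(T_x^{-1}TT_x)\), and summing over the blocks gives \eqref{eq:clifford-character}. The same computation applies on the \(C(A)\) side, since \(\cF(\Pi(u,\sigma))\) has the same underlying space, the same grading and the same \(\rho\); only the scalar \(\lambda(x)\) differs, and it depends only on \(\ell\). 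Hence the formula depends only on \((T,\ell)\), and the two tables agree entry by entry.

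I expect the main subtlety to be the identification of \(\rho(T)\) on the base block \(E_u\) with \(\sigma(T)\). We need the orbit realization normalized at \(u\), so that \(\rho(T_x)\) carries \(E_u\) onto \(E_x\), and we must check that the result does not depend on the choice of the \(T_x\). Independence holds because a different choice changes \(T_x\) by an element of \(\Sp(V_A)_u\), which conjugates the argument inside \(\Sp(V_A)_u\) and leaves \(\chi_\sigma\) unchanged. A further point to verify is that \(\chi_\sigma(T_x^{-1}TT_x)\) is invariant under replacing \((T,\ell)\) by \(S\cdot(T,\ell)\). This follows by reindexing \(x\mapsto Sx\) and taking \(T_{Sx}=ST_x\).
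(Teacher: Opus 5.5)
Your proposal is correct and follows essentially the same route as the paper. Rows come from Mackey's theorem together with Theorem~\ref{thm:clifford-classification}, and columns from Theorem~\ref{thm:intrinsic-clifford-conjugacy} transferred verbatim to the split pair group. Values come from the trace of the block-permuting action on the graded model, where $\cF$ only rescales each $E_x$ by $\lambda_g(x)=\ell_g(x)$ for $x\in F_T$. Your explicit reduction $\rho(T)|_{E_x}=\rho(T_x)\,\rho(T_x^{-1}TT_x)|_{E_u}\,\rho(T_x)^{-1}$ and your two invariance checks spell out details that the paper leaves implicit or records in the paragraph following the theorem.
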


\begin{proof}
The common row parameters are given by the little-group classification and
Theorem~\ref{thm:clifford-classification}. The column parameters for \(C(A)\)
are those of Theorem~\ref{thm:intrinsic-clifford-conjugacy}.

For \(\ASp(A)\), associate to \((T,\chi)\) the restriction
\(\chi|_{F_T}\). Two characters of \(V_A\) with the same restriction to
\(F_T\) differ by an element of \(\operatorname{Ann}(F_T)\), which equals the
image in \eqref{eq:fixed-point-annihilator}; hence the corresponding elements
of \(\ASp(A)\) are conjugate by its normal subgroup \(\widehat{V_A}\).
The conjugation calculation in the proof of
Theorem~\ref{thm:intrinsic-clifford-conjugacy} applies verbatim to the split
pair group and gives the natural centralizer action. Thus \(\ASp(A)\) has the
same column parameters.

In the induced realization, an element with symplectic part \(T\) carries
\(E_x\) to \(E_{Tx}\). Only the summands with \(x\in F_T\) contribute to the
trace. On such a summand, an element \((T,\chi)\in\ASp(A)\) has trace
\[
\chi(x)\chi_\sigma(T_x^{-1}TT_x)
=
\ell(x)\chi_\sigma(T_x^{-1}TT_x).
\]
For \(g=(T,\lambda_g)\in C(A)\), the functor \(\cF\) keeps the same graded
space and the same linear action of \(T\), while multiplying \(E_x\) by
\(\lambda_g(x)\). On the fixed-point subgroup,
\(\lambda_g(x)=\ell_g(x)\). Summing the diagonal blocks gives
\eqref{eq:clifford-character} for both groups.
\end{proof}

\begin{remark}
The common column parametrization gives a direct bijection between the
conjugacy classes of the two groups. If \(g=(T,\lambda_g)\in C(A)\), choose any
character \(\chi\in\widehat{V_A}\) such that
\[
\chi|_{F_T}=\lambda_g|_{F_T}.
\]
Then
\[
[(T,\lambda_g)]_{C(A)}
\longmapsto
[(T,\chi)]_{\ASp(A)}
\]
is independent of the extension \(\chi\) and of the representative of the
Clifford conjugacy class. Under the row correspondence induced by \(\cF\), it
preserves every character value.
\end{remark}

The character formula \eqref{eq:clifford-character} is independent of the
choices of the elements \(T_x\). Replacing
\(T_x\) by \(T_xh\), with \(h\in\Sp(V_A)_u\), conjugates
\(T_x^{-1}TT_x\) inside \(\Sp(V_A)_u\), and hence does not change the value of
\(\chi_\sigma\). It is also constant on the centralizer orbit of \(\ell\), as
required by the column parametrization.

\subsection{Cyclic \texorpdfstring{$2$}{2}-power groups}
\label{subsec:cyclic-2power-example}

We first spell out the formulas for the cyclic \(2\)-power family. This is the
basic example in which the orbit structure is richer than in the elementary
abelian \(2\)-group case but still controlled by elementary linear algebra over
\(\Z/2^k\Z\).

Set
\[
R_k:=\Z/2^k\Z,\qquad
A:=R_k,\qquad
\zeta_k:=e^{2\pi i/2^k}.
\]
Throughout this subsection \(k>1\), and we write
\[
C_k:=C(A).
\]
We identify \(\widehat{A}\) with \(R_k\) by
\[
\xi\longmapsto \chi_\xi,
\qquad
\chi_\xi(x)=\zeta_k^{\,\xi x}.
\]
Thus \(V_A\simeq R_k^2\), and
\begin{equation}\label{eq:omega-cyclic-2power-section}
\omega_A\bigl((x,\xi),(y,\eta)\bigr)=\zeta_k^{\,\xi y-\eta x}.
\end{equation}
In these coordinates
\[
\Sp(V_A)\simeq \SL_2(R_k),
\qquad
|\Sp(V_A)|=3\cdot 2^{3k-2}.
\]
Indeed, if \(M\in M_2(R_k)\), then
\[
\omega_A(Mv,Mw)=\omega_A(v,w)^{\det(M)},
\]
hence preservation of \(\omega_A\) is exactly the condition \(\det(M)=1\).

The \(\SL_2(R_k)\)-orbits in \(V_A\) are controlled by the \(2\)-adic valuation.
For \(x\in R_k\), put
\[
\nu_2(x):=\max\{r:x\in 2^rR_k\},
\qquad
\nu_2(0):=k,
\]
and, for \(v=(x,\xi)\), set
\[
\nu(v):=\min\{\nu_2(x),\nu_2(\xi)\}.
\]
The orbits are
\[
\{0\},
\qquad
\mathcal O_r:=\{v\in R_k^2:\nu(v)=r\},
\qquad
0\leq r\leq k-1.
\]
Let \(R_n=\Z/2^n\Z\). A primitive vector \(\bar w\in R_{k-r}^2\), lifted to
some \(w\in R_k^2\), gives an element \(2^rw\in\mathcal O_r\), and this is
independent of the lift. This gives a bijection from the primitive vectors of
\(R_{k-r}^2\) onto \(\mathcal O_r\): indeed, \(2^rw=2^rw'\) in \(R_k^2\) if
and only if \(w\equiv w'\pmod{2^{k-r}}\), and primitivity is the condition that
at least one coordinate be a unit modulo \(2\). The transitivity follows from
the primitive case over \(R_k\). If \(w=(a,b)\in R_k^2\) is primitive, then
\(aR_k+bR_k=R_k\), so one can choose \(c,d\in R_k\) with \(ad-bc=1\), and the
matrix with first column \(w\) carries \((1,0)\) to \(w\). Applying this to a
primitive lift of \(\bar w\) carries \(u_r=(2^r,0)\) to \(2^rw\).
We use the representatives
\[
u_r:=(2^r,0).
\]
The size of the \(r\)-th orbit is
\begin{equation}\label{eq:cyclic-orbit-size}
|\mathcal O_r|
=2^{2(k-r)}-2^{2(k-r-1)}
=3\cdot2^{2(k-r)-2}.
\end{equation}
Its stabilizer in \(\SL_2(R_k)\) is
\begin{equation}\label{eq:cyclic-stabilizer}
G_r=
\left\{
\begin{pmatrix}a&b\\ c&d\end{pmatrix}\in \SL_2(R_k):
a\equiv1 \pmod{2^{k-r}},\quad
c\equiv0 \pmod{2^{k-r}}
\right\},
\qquad
|G_r|=2^{k+2r}.
\end{equation}
In particular,
\[
G_0=
\left\{
\begin{pmatrix}1&b\\0&1\end{pmatrix}:b\in R_k
\right\}
\simeq (R_k,+).
\]

The row parametrization of Theorem~\ref{thm:clifford-classification} now has
one zero-orbit block and \(k\) valuation blocks. The zero-orbit rows are
\[
\widetilde\Pi_{\mathrm{zero}}(\sigma):=\widetilde\Pi(0,\sigma),
\qquad
\sigma\in\Irr(\SL_2(R_k)),
\]
and, for every \(0\leq r\leq k-1\), the \(r\)-th valuation block consists of
\[
\widetilde\Pi_r(\tau):=\widetilde\Pi(u_r,\tau),
\qquad
\tau\in\Irr(G_r).
\]
We write their characters as
\[
\widetilde\chi_{\mathrm{zero},\sigma}:=\widetilde\chi_{(0,\sigma)},
\qquad
\widetilde\chi_{r,\tau}:=\widetilde\chi_{(u_r,\tau)}.
\]
Their dimensions are
\begin{equation}\label{eq:cyclic-row-dimensions}
\dim\widetilde\Pi_{\mathrm{zero}}(\sigma)=\dim\sigma,
\qquad
\dim\widetilde\Pi_r(\tau)
=|\mathcal O_r|\dim\tau
=3\cdot2^{2(k-r)-2}\dim\tau.
\end{equation}
The stabilizer character theory of the primitive block \(r=0\) is completely
explicit. The irreducible characters of \(G_0\) are indexed by
\(\ell\in R_k\) and given by
\[
\tau_\ell
\left(
\begin{pmatrix}1&b\\0&1\end{pmatrix}
\right)
:=
\zeta_k^{\,\ell b}.
\]
For \(r>0\), the ordinary character theory of \(G_r\) is a separate finite
group problem. The present description uses the valuation orbit
\(\mathcal O_r\), its stabilizer \(G_r\), and the character formula
\eqref{eq:cyclic-character-section}.

The character formula specializes as follows in each valuation block. Write
\(g\in C_k\) as \(g=(T,\lambda_g)\) in the pair description, with
\(T\in \SL_2(R_k)\), and put \(\ell_g=\lambda_g|_{F_T}\). For
\(x\in\mathcal O_r\), choose
\(T_x\in \SL_2(R_k)\) with
\(T_xu_r=x\). Then Theorem~\ref{thm:common-character-table} gives
\begin{equation}\label{eq:cyclic-character-section}
\widetilde\chi_{r,\tau}(g)
=
\sum_{x\in\mathcal O_r\cap F_T}
\ell_g(x)\,
\chi_\tau(T_x^{-1}TT_x).
\end{equation}
The fixed points in \(\mathcal O_r\) can also be read after reducing modulo
\(2^{k-r}\): they correspond to primitive vectors in
\[
\ker(\overline T_r-I)\subseteq (\Z/2^{k-r}\Z)^2,
\]
where \(\overline T_r\) is the reduction of \(T\) modulo \(2^{k-r}\). If this
kernel contains no primitive vector, then the \(r\)-th valuation block has zero
character value at \(g\).

The kernel values are also explicit. For \(v\in V_A\),
\[
\widetilde\chi_{\mathrm{zero},\sigma}(K_v)=\dim\sigma,
\]
while
\[
\widetilde\chi_{r,\tau}(K_v)
=\dim\tau\sum_{x\in\mathcal O_r}\omega_A(v,x).
\]
Writing \(L_j:=(2^jR_k)^2\), the orbit is \(L_r\setminus L_{r+1}\). The sum of
the character \(x\mapsto\omega_A(v,x)\) over \(L_j\) is \(|L_j|\) when
\(\nu(v)\geq k-j\), and is zero otherwise. Subtracting the sums over \(L_r\)
and \(L_{r+1}\) gives
\begin{equation}\label{eq:cyclic-kernel-values}
\widetilde\chi_{r,\tau}(K_v)
=
\begin{cases}
|\mathcal O_r|\dim\tau,& \nu(v)\geq k-r,\\
-2^{2(k-r)-2}\dim\tau,& \nu(v)=k-r-1,\\
0,& \nu(v)<k-r-1.
\end{cases}
\end{equation}
For \(r=0\), this says that the primitive block is nonzero on the kernel only
at \(v=0\) and on the elements of valuation \(k-1\).

The conjugacy classes lying over the conjugacy class of a fixed
\(T\in\SL_2(R_k)\) are the orbits
\[
C_{\SL_2(R_k)}(T)\backslash\widehat{F_T}.
\]
If \(\det(T-I)\) is odd, then \(F_T=0\), and there is a single Clifford
conjugacy class lying over the base class of \(T\). For \(T=\id\), the symplectic
identification \(\widehat{V_A}\simeq V_A\) recovers exactly the \(k+1\) kernel
classes
\[
\{0\},\mathcal O_0,\ldots,\mathcal O_{k-1}.
\]
Thus the cyclic case has \(k+1\) row blocks and \(k+1\) kernel conjugacy
classes, both ordered by the same valuation filtration.

\subsection{Elementary abelian \texorpdfstring{$2$}{2}-groups}
\label{subsec:elementary-2group-example}

The other basic \(2\)-primary family is
\[
A=\F_2^m,\qquad m\geq1.
\]
Here \(V_A\) is a symplectic vector space over \(\F_2\), and we use the
notation
\[
V_m:=V_A,\qquad C_m:=C(A).
\]

Let \(A^*:=\Hom_{\F_2}(A,\F_2)\) be the linear dual. The assignment
\[
\xi\longmapsto \chi_\xi,\qquad \chi_\xi(x):=(-1)^{\xi(x)},
\]
identifies \(A^*\) with the Pontryagin dual \(\widehat{A}\); we use it to write
\[
V_m=A\oplus A^*.
\]
This is a \(2m\)-dimensional \(\F_2\)-vector space. Fix a basis
\(e_1,\dots,e_m\) of \(A\); together with the dual basis of \(A^*\), it
identifies \(V_m\) with \(\F_2^{2m}\).

The symplectic form is
\[
\langle\,\cdot\,,\,\cdot\,\rangle:V_m\times V_m\to\F_2,
\qquad
\bigl\langle (x,\xi),(y,\eta)\bigr\rangle:=\xi(y)+\eta(x).
\]
It is alternating and nondegenerate, and it is related to the global
bicharacter by
\[
\omega_A=(-1)^{\langle\,\cdot\,,\,\cdot\,\rangle}.
\]
We use \(\langle\,\cdot\,,\,\cdot\,\rangle\) for computations over \(\F_2\)
and \((-1)^{\langle\,\cdot\,,\,\cdot\,\rangle}\) for the corresponding
character values. The isometry group of
\(\langle\,\cdot\,,\,\cdot\,\rangle\) is \(\Sp(V_m)\), which in the fixed
basis is \(\Sp(2m,2)\). We also use
\(\langle\,\cdot\,,\,\cdot\,\rangle\) to identify \(V_m\) with its character
group:
\begin{equation}\label{eq:Vm-character-identification}
v\longmapsto \chi_v,\qquad
\chi_v(x):=(-1)^{\langle v,x\rangle}.
\end{equation}
Under this identification, the \(\Sp(V_m)\)-orbits in \(V_m\) are the same as
the \(\Sp(V_m)\)-orbits in \(\Irr(V_m)\).
The orbit structure here is simpler than in the cyclic \(2\)-power case:
\(\Sp(V_m)\) is transitive on \(V_m\setminus\{0\}\), hence it has exactly two
orbits on \(V_m\). This two-orbit decomposition is the input for the
Basheer--Moori discussion. By \cite[Theorem~6.1]{GalindoClifford2026}, the
projection
\[
C_m\to\Sp(V_m)
\]
splits for \(m=1\) and is nonsplit for \(m\geq2\).

The two orbits reduce the row parametrization to
\[
V_m=\{0\}\sqcup\bigl(V_m\setminus\{0\}\bigr).
\]
Set
\[
u_m=(e_1,0)\in A\oplus A^*.
\]
This represents the nonzero orbit, and we put
\[
P_m:=\Stab_{\Sp(V_m)}(u_m).
\]
Since the line \(\F_2u_m=\{0,u_m\}\) contains \(u_m\) as its only nonzero
vector, \(P_m\) is at the same time the stabilizer of this line; it is the
maximal parabolic subgroup
\begin{equation}\label{eq:Pm-parabolic-structure}
P_m\simeq 2^{2m-1}:\Sp(2m-2,2),
\end{equation}
where \(2^{2m-1}\) denotes an elementary abelian \(2\)-group. To see this,
choose a symplectic decomposition
\[
V_m=\F_2u_m\oplus W\oplus\F_2f,
\qquad
\langle u_m,f\rangle=1,
\]
with \(W\) orthogonal to \(\F_2u_m\oplus\F_2f\). The action of \(P_m\) on
\(u_m^\perp/\F_2u_m\simeq W\) gives a surjection onto \(\Sp(W)\). For
\(a\in W\) and \(c\in\F_2\), define
\[
\begin{aligned}
n_{a,c}(u_m)&=u_m,\\
n_{a,c}(w)&=w+\langle a,w\rangle u_m &&(w\in W),\\
n_{a,c}(f)&=f+a+cu_m.
\end{aligned}
\]
A direct calculation shows that these are precisely the elements acting
trivially on \(u_m^\perp/\F_2u_m\simeq W\), and
\[
n_{a,c}n_{b,d}
=n_{a+b,c+d+\langle a,b\rangle}.
\]
Because an alternating form is symmetric in characteristic \(2\), these
elements commute; moreover,
\[
n_{a,c}^2=n_{0,\langle a,a\rangle}=1.
\]
They therefore form an elementary abelian group of order
\(2^{\dim W+1}=2^{2m-1}\). Extending each element of \(\Sp(W)\) by fixing
\(u_m\) and \(f\) splits the quotient map, and every element of \(P_m\) has a
unique factorization of the form \(n_{a,c}S\), proving
\eqref{eq:Pm-parabolic-structure}.

The row parametrization of Theorem~\ref{thm:clifford-classification} now has
only two blocks, the zero-orbit and nonzero-orbit blocks. The zero-orbit rows
are
\[
\widetilde\Pi_{\mathrm{zero}}(\sigma):=\widetilde\Pi(0,\sigma),
\qquad
\sigma\in\Irr(\Sp(V_m)),
\]
and the nonzero-orbit rows are
\[
\widetilde\Pi_{\mathrm{nz}}(\tau):=\widetilde\Pi(u_m,\tau),
\qquad
\tau\in\Irr(P_m).
\]
We write the corresponding characters as
\[
\widetilde\chi_{\mathrm{zero},\sigma}:=\widetilde\chi_{(0,\sigma)},
\qquad
\widetilde\chi_{\mathrm{nz},\tau}:=\widetilde\chi_{(u_m,\tau)}.
\]
Their dimensions are
\begin{equation}\label{eq:elementary-2group-row-dimensions}
\dim\widetilde\Pi_{\mathrm{zero}}(\sigma)=\dim\sigma,
\qquad
\dim\widetilde\Pi_{\mathrm{nz}}(\tau)
=(2^{2m}-1)\dim\tau.
\end{equation}
Thus every nonzero-orbit irreducible degree is divisible by \(2^{2m}-1\). The
ordinary character theories of \(\Sp(V_m)\) and \(P_m\) are separate
finite-group problems; the present description uses them together with the two-orbit
decomposition and the character formula
\eqref{eq:elementary-2group-character}.

The same two-orbit description gives a compact block form for the character
table. The zero-orbit block is the ordinary character table of \(\Sp(V_m)\),
inflated along
\[
C_m\to \Sp(V_m).
\]
The nonzero-orbit block is controlled by the characters of \(P_m\) and by fixed
nonzero vectors of the symplectic part \(T\).

The character formula specializes as follows. Write \(g\in C_m\) as
\(g=(T,\lambda_g)\) in the pair description, with \(T\in\Sp(V_m)\), and put
\(\ell_g=\lambda_g|_{F_T}\). For each \(x\in V_m\setminus\{0\}\), choose
\(T_x\in\Sp(V_m)\) with \(T_xu_m=x\). Then
Theorem~\ref{thm:common-character-table} gives
\begin{equation}\label{eq:elementary-2group-character}
\widetilde\chi_{\mathrm{nz},\tau}(g)
=
\sum_{x\in F_T\setminus\{0\}}
\ell_g(x)\,
\chi_\tau(T_x^{-1}TT_x).
\end{equation}
The fixed-point set indexing the sum is
\[
F_T\setminus\{0\}.
\]
Consequently, if \(d(T):=\dim_{\F_2}F_T\), then the nonzero-orbit
character value is a sum over \(2^{d(T)}-1\) terms, and it vanishes whenever
\(d(T)=0\).

On the kernel \(V_m\subset C_m\), the values are completely explicit. For
\(v\in V_m\),
\[
\widetilde\chi_{\mathrm{zero},\sigma}(K_v)=\dim\sigma,
\]
whereas
\begin{equation}\label{eq:elementary-kernel-values}
\widetilde\chi_{\mathrm{nz},\tau}(K_v)
=\dim\tau\sum_{x\in V_m\setminus\{0\}}(-1)^{\langle v,x\rangle}
=
\begin{cases}
(2^{2m}-1)\dim\tau,& v=0,\\
-\dim\tau,& v\neq0.
\end{cases}
\end{equation}
Thus the restriction of a nonzero-orbit character to the kernel is a fixed
multiple of the sum of all nontrivial kernel characters. After division by
\(\dim\tau\), the two kernel values are independent of \(\tau\). These values
give the identity Fischer-matrix calculation in
Subsection~\ref{subsec:basheer-moori-conjecture}.

The conjugacy-class description is similarly compact. For a fixed
\(T\in\Sp(V_m)\), the group \(\widehat{F_T}\) is an \(\F_2\)-vector space of
dimension \(d(T)\), and the conjugacy classes of \(C_m\) lying over the conjugacy
class of \(T\) are
\[
C_{\Sp(V_m)}(T)\backslash\widehat{F_T}.
\]
In particular, if \(T\) has no nonzero fixed vector, then \(F_T=0\), and there
is a single Clifford conjugacy class lying over the base conjugacy class of \(T\).
For \(T=\id\), one gets the two kernel conjugacy classes, corresponding under
\(\widehat{V_m}\simeq V_m\) to \(0\) and \(V_m\setminus\{0\}\).

\subsection{The Basheer--Moori conjecture}
\label{subsec:basheer-moori-conjecture}

Keep the notation of Subsection~\ref{subsec:elementary-2group-example}, and
assume \(m\ge2\). Then \(C_m\) is the nonsplit extension
\begin{equation}\label{eq:elementary-clifford-extension}
1\to V_m\to C_m\to \Sp(V_m)\simeq\Sp(2m,2)\to 1.
\end{equation}
This is the family \(2^{2m}\!\cdot\Sp(2m,2)\) considered by Basheer and
Moori. To formulate their conjecture in our notation, we first recall the
relevant Clifford--Fischer terminology.

Clifford--Fischer theory is a character-table method for finite group
extensions, developed from Clifford theory and used especially when a group has
a large normal subgroup. In the terminology of
\cite{BasheerMooriSurvey2015}, the construction starts with an extension
\[
1\to N\to G\to \overline G\to 1.
\]
The quotient \(\overline G\) acts on \(\Irr(N)\), and the orbits organize the
irreducible characters of \(G\) into blocks. If
\(\theta\in\Irr(N)\) is an orbit representative, one forms its inertia group
\[
I_G(\theta)=\{g\in G:{}^g\theta=\theta\}
\]
and the inertia factor \(I_G(\theta)/N\). The corresponding block of the
character table is then assembled from two kinds of data. The first is the
character table, ordinary or projective, of the inertia factor. The second is
the family of Fischer matrices. Concretely, if
\(\overline{\mathcal K}\) is a conjugacy class of \(\overline G\), then its
preimage in \(G\) usually splits into several conjugacy classes. The Fischer
matrix attached to \(\overline{\mathcal K}\) has columns corresponding to the
classes of \(G\) lying over \(\overline{\mathcal K}\), and its rows are organized by
orbit representatives in \(\Irr(N)\) and their inertia factors. Its entries
record how the character tables of the inertia factors
contribute to the values of irreducible characters of \(G\) on those classes.
Thus the Fischer matrices provide the remaining character-value data once the
inertia-factor tables are known; see
\cite[Sections~1 and~3]{BasheerMooriSurvey2015} for the precise normalization.
The subtle point is that \(\theta\) need
not extend to an ordinary character of \(I_G(\theta)\). In that case the
Clifford--Fischer construction may require a projective character table of the
inertia factor, with a nontrivial factor set.

For the present family, Basheer and Moori write
\[
G_m=2^{2m}\!\cdot\Sp(2m,2),\qquad m\ge2,
\]
with elementary abelian kernel \(2^{2m}\) \cite{BasheerMooriNonSplit2015,
BasheerMooriSurvey2015}. In our notation this is the Clifford extension
\eqref{eq:elementary-clifford-extension}.
Applying this construction with \(G=C_m\) and \(N=V_m\), the character table of
\(C_m\) is decomposed into blocks indexed by the \(\Sp(V_m)\)-orbits in
\(\Irr(V_m)\). Through \eqref{eq:Vm-character-identification}, these correspond
to the two orbits in \(V_m\) from
Subsection~\ref{subsec:elementary-2group-example}:
\[
\{0\},\qquad V_m\setminus\{0\}.
\]
Hence there are two Clifford--Fischer blocks. The first is attached to the
trivial character \(\chi_0\) and has inertia factor
\[
H_1=I_{C_m}(\chi_0)/V_m\simeq \Sp(V_m)\simeq\Sp(2m,2).
\]
The nontrivial block is attached to the character \(\chi_{u_m}\), where
\(u_m\in V_m\setminus\{0\}\) is the representative chosen in
Subsection~\ref{subsec:elementary-2group-example}. Its inertia factor is
\[
H_2=I_{C_m}(\chi_{u_m})/V_m\simeq P_m.
\]
We denote this block by
\[
\mathcal B_{\mathrm{nz}}(C_m\mid V_m).
\]
Because it comes from a nontrivial character of \(V_m\), the
Clifford--Fischer method does not by itself decide whether the ordinary
character table of \(H_2\) is sufficient; a projective character table of
\(H_2\), with a nontrivial factor set, could be required. Basheer and Moori
conjectured that this projective table is unnecessary for every \(m\ge2\)
\cite[Section~5.2]{BasheerMooriSurvey2015}. In the same section they prove this
claim for \(m=2,3,4,5,6\), and report complete Clifford--Fischer computations
of the inertia factors, Fischer matrices, and character tables for
\[
G_3=2^6\!\cdot\Sp(6,2),
\qquad
G_4=2^8\!\cdot\Sp(8,2);
\]
see also \cite{BasheerMooriNonSplit2015}.

\begin{theorem}
\label{thm:basheer-moori-conjecture}
Let \(m\geq2\). The nontrivial Clifford--Fischer block of \(C_m\) relative to
\(V_m\) is
\[
\mathcal B_{\mathrm{nz}}(C_m\mid V_m)
=
\bigl\{\widetilde\chi_{\mathrm{nz},\tau}:\tau\in\Irr(P_m)\bigr\},
\]
indexed by the \emph{ordinary} irreducible characters of its inertia factor
\[
H_2=I_{C_m}(\chi_{u_m})/V_m\simeq P_m,
\]
with values in the fixed-point columns \((T,\ell)\) given by
\eqref{eq:elementary-2group-character}. No projective character table of
\(H_2\) is required.
\end{theorem}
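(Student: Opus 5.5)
The proof will specialize Theorem~\ref{thm:inertia-character-extension} and Theorem~\ref{thm:common-character-table} to \(A=\F_2^m\), \(u=u_m\). The first step matches the Clifford--Fischer data with ours. The character \(\chi_{u_m}\in\Irr(V_m)\) is obtained from \eqref{eq:Vm-character-identification}. The kernel of \(C_m\) is written as \(K_v=(\id,\vartheta_v)\), and the character \(\chi_u\) of \(D_C\) from Subsection~\ref{subsec:clifford-table-rows} is evaluated on it. This shows that the character \(K_v\mapsto\omega_A(v,u_m)=(-1)^{\langle v,u_m\rangle}\) is exactly \(\chi_{u_m}\), using that \(\langle\cdot,\cdot\rangle\) is symmetric in characteristic \(2\). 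Hence the Basheer--Moori inertia group \(I_{C_m}(\chi_{u_m})\) is the group \(I_{u_m}=\{(T,\lambda):Tu_m=u_m\}\). Its image in \(\Sp(V_m)\) is \(P_m\), so \(H_2\simeq P_m\).

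The second step is the key point. By Theorem~\ref{thm:inertia-character-extension}, \(\widehat\chi_{u_m}(T,\lambda)=\lambda(u_m)\) is a linear character of \(I_{u_m}\) extending \(\chi_{u_m}\). Any factor set attached to \(\chi_{u_m}\) on \(H_2\) is therefore a coboundary. In Clifford theory (Gallagher's theorem), an invariant character with an extension \(\widehat\chi\) has its irreducible characters of \(I\) lying over it given by \(\widehat\chi\otimes\bar\tau\) for ordinary \(\tau\in\Irr(I/N)\), and induction gives a bijection with the block. Concretely, \eqref{eq:intrinsic-induced-clifford-row} identifies the block members as \(\Ind_{I_{u_m}}^{C_m}(\widehat\chi_{u_m}\otimes\bar\tau)\simeq\widetilde\Pi(u_m,\tau)\) for \(\tau\in\Irr(P_m)\). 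Theorem~\ref{thm:clifford-classification} gives that these are irreducible and pairwise non-isomorphic. The orbit equivalence is trivial here because \(u_m\) is the fixed representative.

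The third step checks that this set is the whole block. Every irreducible of \(C_m\) lying over the orbit of \(\chi_{u_m}\) has the form \(\widetilde\Pi(u,\sigma)\) with \(u\) in the nonzero orbit, by Theorem~\ref{thm:clifford-classification}. This uses the fact that the \(D_C\)-weights of \(\widetilde\Pi(u,\sigma)\) are the orbit of \(u\), which the orbit realization makes visible. We may then move \(u\) to \(u_m\). The values in fixed-point columns are then \eqref{eq:elementary-2group-character}, as specialized from Theorem~\ref{thm:common-character-table}.

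I expect the main obstacle to be the interface with the Clifford--Fischer normalization of \cite{BasheerMooriSurvey2015}: one has to verify that "projective table not required" means exactly that the factor set \(\alpha\) attached to \(\theta=\chi_{u_m}\) is trivial in \(H^2(H_2,\C^\times)\). If this holds, Theorem~\ref{thm:inertia-character-extension} finishes the argument. I would state this correspondence explicitly: the Clifford--Fischer factor set is \(\alpha(S,T)=\theta(c(S,T))\) for a section of \(I\to H_2\), which is the cocycle \(\alpha_{u}\) shown there to be the coboundary of \(b_u\). Replacing the section \(r\) by \(r'(T)=r(T)\) times an element of \(D_C\) with \(b_u\)-value \(b_u(T)^{-1}\) makes \(\alpha\) identically \(1\). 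Such an element exists because \(\chi_{u_m}\) is onto \(\{\pm1\}\), but \(b_u\) may take values outside \(\pm1\). So I would instead pass to the normalized cocycle \(\alpha/\delta b_u=1\), which is allowed in the projective formulation up to cohomology. The conclusion is that the Fischer matrices can be built using the ordinary table of \(P_m\).
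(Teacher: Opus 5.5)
Your proposal is correct and follows essentially the paper's proof: you identify the inertia factor with \(P_m\), use Theorem~\ref{thm:clifford-classification} to get the nonzero-orbit block, and use Theorem~\ref{thm:inertia-character-extension} to see that \(\chi_{u_m}\) extends to its inertia group, so only ordinary characters of \(P_m\) enter. Your hesitation at the end is unnecessary, so the fallback to the cohomologous cocycle is not needed: for \(T\in P_m\), \(b_u(T)=\lambda_T(u_m)\) is the value of the character \(\lambda_T|_{F_T}\in\widehat{F_T}\) at the \(2\)-torsion element \(u_m\in F_T\), hence lies in \(\{\pm1\}\), and the section can be modified directly so that the factor set is identically \(1\).
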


\begin{proof}
Since \(V_m\) is abelian, it acts trivially by conjugation on \(\Irr(V_m)\), so
the action of \(C_m\) on \(\Irr(V_m)\) factors through
\(C_m/V_m\simeq\Sp(V_m)\). Under
\eqref{eq:Vm-character-identification}, this is the natural action of
\(\Sp(V_m)\) on \(V_m\). Hence
\[
H_2=I_{C_m}(\chi_{u_m})/V_m
=\Stab_{\Sp(V_m)}(\chi_{u_m})
=\Stab_{\Sp(V_m)}(u_m)
=P_m.
\]
The parabolic structure of \(P_m\) is \eqref{eq:Pm-parabolic-structure}.

By Theorem~\ref{thm:clifford-classification}, the nonzero-orbit irreducibles
are
\[
\{\widetilde\Pi(u_m,\tau):\tau\in\Irr(P_m)\}.
\]
Passing to characters gives the nonzero-orbit block, which is the nontrivial
Clifford--Fischer block, with values
\eqref{eq:elementary-2group-character}. The inducing character
\(\chi_{u_m}\) extends to its inertia subgroup by
Theorem~\ref{thm:inertia-character-extension}, and \(\cF\) transports the
ordinary induced representation \(\Pi(u_m,\tau)\) of \(\ASp(\F_2^m)\) to
\(\widetilde\Pi(u_m,\tau)\). Hence the block is indexed by the ordinary
characters \(\tau\in\Irr(P_m)\), and no projective character table of \(H_2\)
enters. The nonsplitting remains a property of the group extension, but it
creates neither projective little-group data nor affine ambiguity in the
fixed-point parametrization of the columns.
\end{proof}

The projective ambiguity in the Clifford--Fischer construction is not removed
by choosing a splitting of the Clifford extension, which does not exist for
\(m\ge2\). In the present approach it is removed by transporting ordinary
induction data from \(\ASp(\F_2^m)\) through the tensor isomorphism \(\cF\).

The identity coset already shows the same mechanism at the level of Fischer
matrices. The kernel values \eqref{eq:elementary-kernel-values} give
\[
\widetilde\chi_{\mathrm{nz},\tau}(1)
=(2^{2m}-1)\dim\tau,
\qquad
\widetilde\chi_{\mathrm{nz},\tau}(K_v)=-\dim\tau
\quad(v\neq0).
\]
After dividing by \(\dim\tau\), the nontrivial block contributes the row
\[
\bigl(2^{2m}-1,\,-1\bigr),
\]
which matches the nontrivial row of the identity Fischer matrix described by
Basheer and Moori for \(2^{2m}\!\cdot\Sp(2m,2)\), after the normalization by
\(\dim\tau\); see
\cite[Section~5.2]{BasheerMooriSurvey2015}. The parametrization in
Theorem~\ref{thm:clifford-classification} gives
the inertia factor \(H_2\) and recovers the basic Fischer-matrix data from the
Clifford character formula.

Using \eqref{eq:Vm-character-identification} to identify the translation
subgroup with \(V_m\), one has
\[
\ASp(\F_2^m)\cong\Sp(V_m)\ltimes V_m.
\]
Remark~\ref{rem:identical-character-tables} specializes here to Fischer's
character-table coincidence for \(C_m\) and \(\ASp(\F_2^m)\)
\cite{Fischer1988Identical}. In the present description, the two tables are
computed from the same induced row data and the same fixed-point column
parameters.

Mastel's independent Clifford-theoretic analysis overlaps with this
elementary-abelian specialization. In rank \(m+1\),
\cite[Lemma~IV.6]{Mastel2026Clifford} gives a central involution
\(z_{m+1}\in P_{m+1}\) and an isomorphism
\begin{equation}\label{eq:elementary-stabilizer-quotient}
P_{m+1}/\langle z_{m+1}\rangle\simeq \ASp(\F_2^m).
\end{equation}
Section~V of \cite{Mastel2026Clifford} then uses this quotient to construct a
lifting of irreducible characters from rank \(m\) to rank \(m+1\).
Let \(\chi\in\Irr(C_m)\), and let
\(\chi^{\mathrm{aff}}\in\Irr(\ASp(\F_2^m))\) be the corresponding
affine character under the character-table identification in
Remark~\ref{rem:identical-character-tables}. Inflating
\(\chi^{\mathrm{aff}}\) along \(P_{m+1}\to \ASp(\F_2^m)\) gives an
irreducible character \(\overline\chi\in\Irr(P_{m+1})\). The nonzero-orbit row in
rank \(m+1\)
\[
\widetilde\chi_{\mathrm{nz},\overline\chi}^{(m+1)}
\in\Irr(C_{m+1}),
\]
where the superscript records the rank, is the nonzero-orbit character attached
to Mastel's lifted parameter. Its irreducibility is the nonzero-orbit case of
Theorem~\ref{thm:clifford-classification}. Thus, combining Mastel's quotient
isomorphism with Theorems~\ref{thm:clifford-classification} and
\ref{thm:tensor-equivalence} recovers his lifting construction from rank
\(m\) to rank \(m+1\). This does not reprove Mastel's quotient isomorphism;
rather, it places the resulting lift inside the same tensor-categorical
parametrization that gives Theorem~\ref{thm:basheer-moori-conjecture}.
\section{The Weyl algebra and tensor-power commutants}
\label{sec:weil-algebra}

The algebraic counterpart of the Clifford action is the twisted group algebra
\[
\cA=\C_{\beta_A}[V_A].
\]
The Weyl operators realize \(\cA\) on \(\HH=\C[A]\), producing the projective
Weil action. We use the tensor isomorphism \(\cF\) to describe both its
tensor-power commutants and the adjoint-action commutants; trace duality relates
the latter to doubled tensor powers of the former.

\subsection{The twisted group algebra and its Clifford action}
\label{subsec:twisted-weyl-algebra}

We first fix the algebra and the strict Clifford action on it. The algebra
\(\cA\) has basis \(\{u_v\}_{v\in V_A}\) and multiplication
\begin{equation}\label{eq:twisted-multiplication}
u_vu_w=\beta_A(v,w)\,u_{v+w}.
\end{equation}
For \(g=(T,\lambda)\in C(A)\), define
\begin{equation}\label{eq:alpha-action}
\alpha_g(u_v):=\lambda(v)\,u_{Tv}.
\end{equation}
The defining relation of \(C(A)\) says exactly that \(\alpha_g\) is an algebra
automorphism of \(\cA\). Moreover, \(g\mapsto\alpha_g\) is a genuine group
action on \(\cA\):
\[
\alpha_{(T,\lambda)}\alpha_{(S,\mu)}(u_v)
=
\lambda(Sv)\mu(v)u_{TSv}
=
\alpha_{(TS,\lambda^S\mu)}(u_v).
\]
The multiplication gives the commutation rule
\[
u_vu_w=\omega_A(v,w)\,u_wu_v.
\]

Let \(\HH=\C[A]\), with basis \(\{\ket{x}\}_{x\in A}\). The action of \(\cA\)
on \(\HH\) is
\begin{equation}\label{eq:weyl-action-on-H}
u_{(a,\chi)}\cdot \ket{x}:=\chi(x)\,\ket{x+a}.
\end{equation}
Under this action, \(u_{(a,\chi)}\) is the translation-character operator
\(W_{(a,\chi)}\). They span \(\End_\C(\HH)\). Indeed, if
\[
E_{a+b,b}\ket{x}=
\begin{cases}
\ket{a+b},&x=b,\\
0,&x\neq b,
\end{cases}
\]
then character orthogonality gives
\[
E_{a+b,b}
=
\frac{1}{|A|}\sum_{\chi\in\widehat{A}}\chi(b)^{-1}W_{(a,\chi)}.
\]
Thus the representation map
\[
\cA\longrightarrow \End_\C(\HH)
\]
is surjective. Since both sides have dimension \(|A|^2\), it is an
isomorphism; in particular \(\cA\) is central simple and \(\HH\) is its unique
simple module, up to isomorphism.

The same algebra action gives the projective Weil representation attached to
\(C(A)\). We now construct the projective Weil operators and verify their
covariance. For
\(g\in C(A)\), let \(\HH^g\) be the vector space \(\HH\) with the twisted
\(\cA\)-action
\[
x\cdot_g \xi:=\alpha_g(x)\cdot \xi,
\qquad x\in \cA,\ \xi\in \HH.
\]
Since \(\alpha_g\) is an automorphism, \(\HH^g\) is again simple. The algebra
\(\cA\simeq\End_\C(\HH)\) has, up to isomorphism, a unique simple module, so
there exists an \(\cA\)-module isomorphism
\[
U_g:\HH\xrightarrow{\sim}\HH^g.
\]
By Schur's lemma, \(U_g\) is unique up to a nonzero scalar. Under the
identification \(\cA\simeq\End_\C(\HH)\), the automorphism \(\alpha_g\)
preserves adjoints because it is induced by conjugation with a unitary
representative of the projective Clifford element \(g\). Hence \(U_g^*U_g\)
commutes with \(\cA\), and therefore \(U_g\) may be rescaled to be unitary.
For \(g=(T,\lambda)\), it satisfies
\begin{equation}\label{eq:weil-covariance}
U_gu_vU_g^{-1}=\lambda(v)\,u_{Tv}
\qquad (v\in V_A).
\end{equation}
Both \(U_gU_h\) and \(U_{gh}\) implement \(\alpha_{gh}\). Hence Schur's lemma
gives
\[
U_gU_h=c(g,h)U_{gh}
\]
for some \(c(g,h)\in U(1)\). Thus \(g\mapsto[U_g]\) is a homomorphism
\(C(A)\to\PGL(\HH)\). The scalar ambiguity is irrelevant for the adjoint
action. The Clifford action on
\(\cA\) is strict, while its realization on \(\HH\) is projective. Under
\(\cA\simeq\End_\C(\HH)\), the strict action becomes
\[
\alpha_g=\Ad(U_g).
\]
Thus the projective Weil actions considered in this section have ordinary operator
commutants, independent of the chosen unitary representatives. Their adjoint
actions on \(\cA\) and its tensor powers are genuine
\(C(A)\)-representations.

\subsection{Tensor transport of the twisted group algebra}
\label{subsec:weyl-transport}

Let \(\C[V_A]\) be the commutative group algebra
with basis \(\{\delta_v\}_{v\in V_A}\) and multiplication
\(\delta_v\delta_w=\delta_{v+w}\). It carries a natural action of the affine
symplectic group \(\ASp(A)\) by algebra automorphisms,
\begin{equation}\label{eq:split-action-correct}
(T,\chi)\cdot \delta_v
=
\chi(v)\,\delta_{Tv},
\qquad (T,\chi)\in \ASp(A).
\end{equation}

The word ``commutative'' here refers to the underlying algebra in
\(\Vect\). As an algebra object of
\(\Rep^\omega(\ASp(A))\), it is generally not
commutative: with the symmetry \eqref{eq:omega-braiding},
\[
m\circ c^\omega_{\C[V_A],\C[V_A]}(\delta_v\otimes\delta_w)
=
\omega_A(w,v)\delta_{v+w},
\]
which need not equal \(m(\delta_v\otimes\delta_w)=\delta_{v+w}\). After
transport, the same bicharacter gives the commutation relation in \(\cA\):
\[
u_vu_w=\omega_A(v,w)\,u_wu_v.
\]

\begin{theorem}\label{thm:weyl-transport}
As an algebra object of \(\Rep(C(A))\), the twisted group algebra
\(\cA=\C_{\beta_A}[V_A]\) is isomorphic to \(\cF(\C[V_A])\). Writing \(m\) for
the multiplication of \(\C[V_A]\), the transported multiplication
\(m_{\cF}:=\cF(m)\circ J_{\C[V_A],\C[V_A]}\) is
\[
m_{\cF}(\delta_v\otimes\delta_w)=\beta_A(v,w)\,\delta_{v+w}.
\]
Under \(\delta_v\leftrightarrow u_v\), this is the product
\eqref{eq:twisted-multiplication}; the transported action is then
\eqref{eq:alpha-action}.
\end{theorem}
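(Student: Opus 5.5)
The proof is a direct computation with the definitions of \(\cF\), \(J\), and the \(C(A)\)-action \eqref{eq:twisted-action}, carried out in four steps.

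First, we identify the \(V_A\)-grading of \(\C[V_A]\) as an \(\ASp(A)\)-module. From \eqref{eq:split-action-correct}, the element \((\id,\chi)\) acts on \(\delta_v\) by \(\chi(v)\). Hence \(\delta_v\) spans the weight space \(\C[V_A]_v\) of \eqref{eq:affine-weight-decomposition}. The Sp-part is \(\rho(T)\delta_v=\delta_{Tv}\), which is compatible with the grading.

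Second, we check that \(m\) is a morphism in \(\Rep(\ASp(A))\). Under \eqref{eq:split-action-graded}, \(\delta_v\otimes\delta_w\) has weight \(v+w\) and \(m\) preserves weights. It also commutes with \(\rho(T)\), since \(\delta_{Tv}\delta_{Tw}=\delta_{T(v+w)}\). So \(\C[V_A]\) is an algebra object, with the unit \(\delta_0\) a morphism from the trivial module. Because \(\cF\) is a tensor functor by Theorem~\ref{thm:tensor-equivalence}, \(\cF(\C[V_A])\) with multiplication \(m_\cF=\cF(m)\circ J\) and unit \(\cF(\delta_0)\) is an algebra object in \(\Rep(C(A))\). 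Associativity of \(m_\cF\) follows formally from the monoidal structure of \(J\), and is in any case equivalent to the cocycle identity for \(\beta_A\) already used in the theorem.

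Third, we compute \(m_\cF\). Definition \eqref{eq:J-def} gives \(J(\delta_v\otimes\delta_w)=\beta_A(v,w)\delta_v\otimes\delta_w\), and \(\cF(m)=m\) on the underlying spaces. Therefore
\[
m_\cF(\delta_v\otimes\delta_w)=\beta_A(v,w)\delta_{v+w},
\]
which is \eqref{eq:twisted-multiplication} under \(\delta_v\leftrightarrow u_v\). The unit \(\delta_0\) corresponds to \(u_0\), which is the unit of \(\cA\) because \(\beta_A(0,\cdot)=\beta_A(\cdot,0)=1\).

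Fourth, we transport the action. By \eqref{eq:twisted-action}, for \(g=(T,\lambda)\) we get \(g\cdot\delta_v=\lambda(v)\rho(T)\delta_v=\lambda(v)\delta_{Tv}\). This is exactly \(\alpha_g\) from \eqref{eq:alpha-action}. Hence the linear bijection \(\delta_v\mapsto u_v\) is \(C(A)\)-equivariant and multiplicative, i.e.\ it is an isomorphism of algebra objects in \(\Rep(C(A))\).

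No step poses a real obstacle. The only point needing care is bookkeeping: the weight \(v\) of \(\delta_v\) must be read through the Pontryagin convention \(\iota\) rather than \(\vartheta\), so that \(J\) inserts \(\beta_A(v,w)\) with the arguments in the correct order.
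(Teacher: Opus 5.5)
Your proposal is correct and follows the paper's proof. You compute \(m_\cF\) by applying \(J\) and then \(m\), and you read off the transported action from \eqref{eq:twisted-action}, using that \(\delta_v\) spans the weight-\(v\) component and \(\rho(T)\delta_v=\delta_{Tv}\). Your extra checks that \(m\) and the unit \(\delta_0\) are \(\ASp(A)\)-morphisms spell out what the paper leaves implicit.
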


\begin{proof}
The tensor structure of \(\cF\) is
\[
J(\delta_v\otimes\delta_w)=\beta_A(v,w)\,(\delta_v\otimes\delta_w)
\]
by Theorem~\ref{thm:tensor-equivalence}. Since \(\cF\) is the identity on the
underlying linear maps, one gets
\[
m_{\cF}(\delta_v\otimes\delta_w)
=
m\bigl(\beta_A(v,w)\,\delta_v\otimes\delta_w\bigr)
=
\beta_A(v,w)\,\delta_{v+w}.
\]
This is the multiplication rule of \(\cA\). The same transport turns
\eqref{eq:split-action-correct} into \eqref{eq:alpha-action}. Indeed,
\(\C[V_A]\) has one-dimensional homogeneous component \(\C\delta_v\), and the
linear part sends \(\delta_v\) to \(\delta_{Tv}\). Hence, for
\(g=(T,\lambda_g)\in C(A)\), the functor \(\cF\) gives
\[
g\cdot\delta_v=\lambda_g(v)\,\delta_{Tv},
\]
which is \eqref{eq:alpha-action} under \(\delta_v\leftrightarrow u_v\).
\end{proof}

\subsection{Tensor-power Clifford commutants}
\label{subsec:tensor-power-commutants}

We first consider the commutant of the projective Weil action on tensor powers
of \(\HH\). Choose unitary representatives \(U_g\) for \(g\in C(A)\), and for
\(t\geq1\) put
\begin{equation}\label{eq:def-tensor-power-commutant}
\mathfrak D_t(A)
:=
\{X\in\End_\C(\HH^{\otimes t}):
XU_g^{\otimes t}=U_g^{\otimes t}X\text{ for every }g\in C(A)\}.
\end{equation}
This space is independent of the representatives \(U_g\). We also put
\(\mathfrak D_0(A)=\C\). Under
\(\cA^{\otimes t}\simeq\End_\C(\HH^{\otimes t})\), one has
\begin{equation}\label{eq:tensor-commutant-as-invariants}
\mathfrak D_t(A)=(\cA^{\otimes t})^{C(A)}.
\end{equation}

For \(t\geq1\), define
\[
\Sigma_t:V_A^t\longrightarrow V_A,
\qquad
\Sigma_t(v_1,\ldots,v_t):=v_1+\cdots+v_t,
\]
and put
\begin{equation}\label{eq:zero-sum-tuples}
Z_t(A):=\ker(\Sigma_t).
\end{equation}
For \(\mathbf v=(v_1,\ldots,v_t)\in V_A^t\), write
\begin{equation}\label{eq:kappa-tensor}
\kappa_t(\mathbf v)
:=
\prod_{1\leq i<j\leq t}\beta_A(v_i,v_j),
\qquad
u_{\mathbf v}:=u_{v_1}\otimes\cdots\otimes u_{v_t}.
\end{equation}

For every orbit
\(\mathcal O\in\Sp(V_A)\backslash Z_t(A)\), define
\begin{equation}\label{eq:tensor-orbit-basis}
R_{\mathcal O}
:=
\sum_{\mathbf v\in\mathcal O}
\kappa_t(\mathbf v)^{-1}u_{\mathbf v}.
\end{equation}

\begin{theorem}
\label{thm:tensor-power-commutant}
The operators \(R_{\mathcal O}\), with
\(\mathcal O\in\Sp(V_A)\backslash Z_t(A)\), form an orthogonal basis of
\(\mathfrak D_t(A)\) for the Hilbert--Schmidt inner product. Consequently,
\begin{equation}\label{eq:tensor-commutant-dimension}
\dim\mathfrak D_t(A)
=
|\Sp(V_A)\backslash Z_t(A)|.
\end{equation}
\end{theorem}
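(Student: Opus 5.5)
Using \eqref{eq:tensor-commutant-as-invariants}, I reduce to computing the invariants $(\cA^{\otimes t})^{C(A)}$ under the strict action $\alpha_g^{\otimes t}$. The elements $u_{\mathbf v}$, for $\mathbf v\in V_A^t$, form a basis of $\cA^{\otimes t}$. The action of $g=(T,\lambda)$ sends each basis line to another:
\[
\alpha_g^{\otimes t}(u_{\mathbf v})=\lambda(v_1)\cdots\lambda(v_t)\,u_{T\mathbf v}.
\]
The argument will be to treat $\cA^{\otimes t}$ as a monomial representation, in two stages: first the kernel $D_C$, then a cocycle computation over $\Sp(V_A)$.

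\emph{Step 1 (kernel).} The element $K_w=(\id,\vartheta_w)$ acts on $u_{\mathbf v}$ by the scalar $\vartheta_w(v_1+\cdots+v_t)=\omega_A(w,\Sigma_t(\mathbf v))$. By nondegeneracy of $\omega_A$, the $D_C$-invariants are exactly the span of $u_{\mathbf v}$ with $\mathbf v\in Z_t(A)$. On this span the $C(A)$-action factors through $\Sp(V_A)$. I would make it explicit by showing that, for $\mathbf v\in Z_t(A)$,
\[
\prod_i\lambda(v_i)=\frac{\kappa_t(T\mathbf v)}{\kappa_t(\mathbf v)}.
\]
To prove this, iterate the defining relation \eqref{eq:clifford-pair-description}: by induction on $t$,
\[
\frac{\lambda(v_1+\cdots+v_t)}{\prod_i\lambda(v_i)}=\prod_{i<j}\frac{\beta_A(Tv_i,Tv_j)}{\beta_A(v_i,v_j)},
\]
using bimultiplicativity of $\beta_A$ to split $\beta_A(v_1+\cdots+v_{t-1},v_t)$. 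Then use $\lambda(0)=1$. This is the key identity. Equivalently, one could transport through $\cF$ via Theorem~\ref{thm:weyl-transport}: $\cF(\C[V_A]^{\otimes t})$ with iterated $J$ gives the scalar $\kappa_t$. Either way, it follows that
\[
\alpha_g^{\otimes t}\bigl(\kappa_t(\mathbf v)^{-1}u_{\mathbf v}\bigr)=\kappa_t(T\mathbf v)^{-1}u_{T\mathbf v}
\qquad (\mathbf v\in Z_t(A)).
\]

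\emph{Step 2 (orbit basis).} On the $D_C$-invariant subspace, the rescaled basis $\kappa_t(\mathbf v)^{-1}u_{\mathbf v}$ is permuted by $\Sp(V_A)$ with no scalars. The invariants of a permutation representation are spanned by orbit sums, so the $R_{\mathcal O}$ span $\mathfrak D_t(A)$. They are linearly independent because they have disjoint supports in the basis $\{u_{\mathbf v}\}$.

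\emph{Step 3 (orthogonality).} Under $\cA^{\otimes t}\simeq\End(\HH^{\otimes t})$, the $u_{\mathbf v}$ are the tensor products of Weyl operators. These are Hilbert--Schmidt orthogonal, since $\Tr(W_v^\dagger W_w)=|A|\delta_{v,w}$: the trace of $X_aZ_\chi$ vanishes unless $a=0$ and $\chi=1$. Orthogonality of the $R_{\mathcal O}$ therefore follows from disjointness of supports, and the dimension formula \eqref{eq:tensor-commutant-dimension} follows.

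The main obstacle is Step 1's scalar identity: getting the ordering convention of $\kappa_t$ (the product over $i<j$ of $\beta_A(v_i,v_j)$) consistent with the inductive splitting, and noting that the zero-sum condition is exactly what kills the leftover factor $\lambda(\Sigma_t\mathbf v)$.
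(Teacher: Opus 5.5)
Your proof is correct and is essentially the paper's argument. The paper obtains the same rescaled permutation basis by transporting through \(\cF\) via Theorem~\ref{thm:weyl-transport}. Your key scalar identity is exactly the \(C(A)\)-equivariance of the iterated tensor constraint \(u_{\mathbf v}\mapsto\kappa_t(\mathbf v)\delta_{\mathbf v}\); you verify it by hand instead of citing the tensor structure of \(\cF\).

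One slip to fix: your displayed identity is inverted. The induction you describe, together with \(\lambda(0)=1\), gives \(\prod_i\lambda(v_i)=\kappa_t(\mathbf v)/\kappa_t(T\mathbf v)\) for \(\mathbf v\in Z_t(A)\). This, not its reciprocal, is what your conclusion \(\alpha_g^{\otimes t}(\kappa_t(\mathbf v)^{-1}u_{\mathbf v})=\kappa_t(T\mathbf v)^{-1}u_{T\mathbf v}\) requires.
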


\begin{proof}
By Theorem~\ref{thm:weyl-transport} and the strong tensor structure of
\(\cF\),
\[
(\cA^{\otimes t})^{C(A)}
\simeq
(\C[V_A]^{\otimes t})^{\ASp(A)}.
\]
Identify \(\C[V_A]^{\otimes t}\) with \(\C[V_A^t]\). On its standard basis,
\[
(T,\chi)\cdot\delta_{\mathbf v}
=
\chi(\Sigma_t(\mathbf v))\delta_{T\mathbf v}.
\]
Invariance under the normal subgroup \(\widehat{V_A}\) forces
\(\mathbf v\in Z_t(A)\); on this set the remaining action is the diagonal
permutation action of \(\Sp(V_A)\). Hence the orbit sums
\[
\sum_{\mathbf v\in\mathcal O}\delta_{\mathbf v},
\qquad
\mathcal O\in\Sp(V_A)\backslash Z_t(A),
\]
form a basis of the invariant space.

The iterated tensor constraint maps
\[
u_{\mathbf v}\longmapsto\kappa_t(\mathbf v)\delta_{\mathbf v}.
\]
Thus the inverse images of the orbit sums are exactly the operators
\eqref{eq:tensor-orbit-basis}.

The Weyl basis satisfies
\[
\Tr(u_{\mathbf v}^{\dagger}u_{\mathbf w})
=
|A|^t\delta_{\mathbf v,\mathbf w}.
\]
The supports of distinct orbit sums are disjoint, so the operators
\(R_{\mathcal O}\) are orthogonal. The dimension formula follows from the
basis parametrization.
\end{proof}

The same computation gives the norms
\begin{equation}\label{eq:tensor-orbit-orthogonality}
\langle R_{\mathcal O},R_{\mathcal O'}\rangle_{\mathrm{HS}}
=
|A|^t|\mathcal O|\,\delta_{\mathcal O,\mathcal O'}.
\end{equation}

The normalization in \eqref{eq:tensor-orbit-basis} also makes the algebra
structure explicit. For \(\mathbf v\in Z_t(A)\), put
\[
\widetilde u_{\mathbf v}:=\kappa_t(\mathbf v)^{-1}u_{\mathbf v}.
\]
For \(\mathbf v,\mathbf w\in Z_t(A)\), put
\[
\alpha_t(\mathbf v,\mathbf w)
:=
\prod_{1\leq i<j\leq t}\omega_A(w_i,v_j).
\]

\begin{proposition}\label{prop:tensor-commutant-product}
For \(\mathbf v,\mathbf w\in Z_t(A)\), one has
\begin{equation}\label{eq:tensor-phase-product}
\widetilde u_{\mathbf v}\widetilde u_{\mathbf w}
=
\alpha_t(\mathbf v,\mathbf w)\widetilde u_{\mathbf v+\mathbf w}.
\end{equation}
The function \(\alpha_t\) is an \(\Sp(V_A)\)-invariant bicharacter on
\(Z_t(A)\). Consequently,
\begin{equation}\label{eq:tensor-twisted-invariants}
\mathfrak D_t(A)
\simeq
\C_{\alpha_t}[Z_t(A)]^{\Sp(V_A)}
\end{equation}
as algebras.
\end{proposition}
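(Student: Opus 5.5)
The plan is to compute the product \(u_{\mathbf v}u_{\mathbf w}\) factorwise in \(\cA^{\otimes t}\) and then absorb the normalizations \(\kappa_t\). By \eqref{eq:twisted-multiplication},
\[
u_{\mathbf v}u_{\mathbf w}=\Bigl(\prod_{i=1}^t\beta_A(v_i,w_i)\Bigr)u_{\mathbf v+\mathbf w}.
\]
Hence
\[
\widetilde u_{\mathbf v}\widetilde u_{\mathbf w}
=\frac{\kappa_t(\mathbf v+\mathbf w)}{\kappa_t(\mathbf v)\kappa_t(\mathbf w)}\prod_i\beta_A(v_i,w_i)\,\widetilde u_{\mathbf v+\mathbf w}.
\]
Since \(\beta_A\) is a bicharacter, expanding \(\kappa_t(\mathbf v+\mathbf w)=\prod_{i<j}\beta_A(v_i+w_i,v_j+w_j)\) gives
\[
\frac{\kappa_t(\mathbf v+\mathbf w)}{\kappa_t(\mathbf v)\kappa_t(\mathbf w)}=\prod_{i<j}\beta_A(v_i,w_j)\beta_A(w_i,v_j).
\]
So the total phase is
\[
\prod_{i,j:\,i\le j}\beta_A(v_i,w_j)\prod_{i<j}\beta_A(w_i,v_j).
\]
The key step is to use the zero-sum conditions to convert this into \(\prod_{i<j}\omega_A(w_i,v_j)\). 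Since \(\sum_j w_j=0\), we have \(\prod_{j}\beta_A(v_i,w_j)=1\) for each \(i\), and therefore
\[
\prod_{j\ge i}\beta_A(v_i,w_j)=\prod_{j<i}\beta_A(v_i,w_j)^{-1}.
\]
After relabeling \(i\leftrightarrow j\), the first factor becomes \(\prod_{i<j}\beta_A(v_j,w_i)^{-1}\). Combining with the second factor yields
\[
\prod_{i<j}\beta_A(w_i,v_j)\beta_A(v_j,w_i)^{-1}=\prod_{i<j}\omega_A(w_i,v_j)=\alpha_t(\mathbf v,\mathbf w).
\]
This proves \eqref{eq:tensor-phase-product}. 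I expect this index bookkeeping (which sum-zero condition to use, and on which side) to be the main obstacle; if this ordering fails I would instead use \(\sum_i v_i=0\) on the first factor and compare.

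For the remaining claims, the bicharacter property of \(\alpha_t\) is immediate from that of \(\omega_A\), since \(\alpha_t\) is a product of bicharacters in the coordinates. \(\Sp(V_A)\)-invariance follows because \(\omega_A(Tw_i,Tv_j)=\omega_A(w_i,v_j)\) for \(T\in\Sp(V_A)\). In particular \(\alpha_t\) is a \(2\)-cocycle on \(Z_t(A)\), so \(\C_{\alpha_t}[Z_t(A)]\) with basis \(\widetilde u_{\mathbf v}\) is an associative algebra. By \eqref{eq:tensor-phase-product} it is exactly the span of \(\{\widetilde u_{\mathbf v}:\mathbf v\in Z_t(A)\}\) inside \(\cA^{\otimes t}\), which is a subalgebra. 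The diagonal action \(T\cdot\widetilde u_{\mathbf v}=\widetilde u_{T\mathbf v}\) acts by algebra automorphisms precisely because \(\alpha_t\) is invariant.

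By Theorem~\ref{thm:tensor-power-commutant}, \(\mathfrak D_t(A)\) is spanned by the orbit sums \(R_{\mathcal O}=\sum_{\mathbf v\in\mathcal O}\widetilde u_{\mathbf v}\). These orbit sums form precisely a basis of the invariants \(\C_{\alpha_t}[Z_t(A)]^{\Sp(V_A)}\), since the action permutes the basis. Therefore \(\mathfrak D_t(A)\) coincides with this invariant subalgebra, and the isomorphism \eqref{eq:tensor-twisted-invariants} is the identity inclusion into \(\cA^{\otimes t}\).
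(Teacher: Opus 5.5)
Your proof is correct and follows the same route as the paper. You multiply factorwise in \(\cA^{\otimes t}\), expand \(\kappa_t(\mathbf v+\mathbf w)\) by bilinearity, and use the zero-sum condition to collapse the phase to \(\prod_{i<j}\omega_A(w_i,v_j)\); your index bookkeeping checks out, and in fact only \(\Sigma_t(\mathbf w)=0\) is needed. You then identify \(\mathfrak D_t(A)\) with the invariant subalgebra through the orbit-sum basis of Theorem~\ref{thm:tensor-power-commutant}, a step you spell out more explicitly than the paper does.
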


\begin{proof}
Using \eqref{eq:twisted-multiplication}, the coefficient of
\(\widetilde u_{\mathbf v+\mathbf w}\) in
\(\widetilde u_{\mathbf v}\widetilde u_{\mathbf w}\) is
\[
\frac{\kappa_t(\mathbf v+\mathbf w)}
{\kappa_t(\mathbf v)\kappa_t(\mathbf w)}
\prod_{i=1}^t\beta_A(v_i,w_i).
\]
Expanding \(\kappa_t(\mathbf v+\mathbf w)\) and using
\(\Sigma_t(\mathbf v)=\Sigma_t(\mathbf w)=0\) reduces this factor to
\[
\prod_{i<j}
\frac{\beta_A(w_i,v_j)}{\beta_A(v_j,w_i)}
=
\prod_{i<j}\omega_A(w_i,v_j),
\]
which proves \eqref{eq:tensor-phase-product}. The bilinearity and
\(\Sp(V_A)\)-invariance of \(\omega_A\) show that \(\alpha_t\) is an invariant
bicharacter. The normalized Weyl basis therefore realizes the twisted group
algebra \(\C_{\alpha_t}[Z_t(A)]\), and taking \(\Sp(V_A)\)-invariants gives
\eqref{eq:tensor-twisted-invariants}.
\end{proof}

The multiplication in the orbit basis can also be written explicitly. If
\(\mathcal O,\mathcal P,\mathcal Q\) are symplectic orbits in \(Z_t(A)\),
choose \(\mathbf z\in\mathcal Q\) and set
\[
c_{\mathcal O,\mathcal P}^{\mathcal Q}
:=
\sum_{\substack{\mathbf v\in\mathcal O,\ \mathbf w\in\mathcal P\\
\mathbf v+\mathbf w=\mathbf z}}
\alpha_t(\mathbf v,\mathbf w).
\]
The \(\Sp(V_A)\)-invariance of \(\alpha_t\) shows that this scalar is
independent of \(\mathbf z\in\mathcal Q\). Summing
\eqref{eq:tensor-phase-product} over \(\mathcal O\times\mathcal P\) gives
\begin{equation}\label{eq:tensor-orbit-product}
R_{\mathcal O}R_{\mathcal P}
=
\sum_{\mathcal Q}
c_{\mathcal O,\mathcal P}^{\mathcal Q}R_{\mathcal Q}.
\end{equation}

In prime local dimension, tensor-power Clifford commutants have been studied
through finite-geometric and dual-pair methods
\cite{GrossNezamiWalter2021,MontealegreMoraGross2025,BittelEtAl2026}.
Theorem~\ref{thm:tensor-power-commutant} gives the orbit-sum description
uniformly for every finite abelian group \(A\). Its concrete force is greatest
when \(V_A\) is not a vector space over a finite field, so finite-field methods
no longer apply directly.

\subsection{The cyclic case}
\label{subsec:cyclic-tensor-commutants}

We now solve the orbit problem completely when \(A\) is cyclic. We begin with
a prime-power group
\[
A=\Z/p^r\Z,
\qquad
R:=\Z/p^r\Z.
\]
Using the character \(x\mapsto e^{2\pi i x/p^r}\) to identify
\(\widehat{A}\) with \(R\), one has
\[
V_A\simeq R^2,
\qquad
\Sp(V_A)\simeq\SL_2(R).
\]

Fix \(t\geq1\) and put \(m=t-1\). Projection onto the first \(m\)
coordinates identifies \(Z_t(A)\) equivariantly with
\(M_{2\times m}(R)\). Its inverse sends
\[
(v_1,\ldots,v_m)
\longmapsto
(v_1,\ldots,v_m,-v_1-\cdots-v_m).
\]
Writing the vectors \(v_i\in R^2\) as columns, the action of
\(\SL_2(R)\) becomes left multiplication on \(M_{2\times m}(R)\).
For
\[
M=
\begin{pmatrix}
x\\ y
\end{pmatrix}
\in M_{2\times m}(R),
\qquad x,y\in R^m,
\]
put
\begin{equation}\label{eq:row-module-orientation}
L_M:=Rx+Ry\leq R^m,
\qquad
\varepsilon_M:=x\wedge y\in\bigwedge_R^2L_M.
\end{equation}

An \emph{oriented two-generated submodule} of \(R^m\) is a pair
\((L,\varepsilon)\), where \(L\leq R^m\) is generated by at most two
elements and \(\varepsilon\) generates \(\bigwedge_R^2L\). When this
exterior square is zero, its zero element is regarded as its unique
generator.

\begin{theorem}
\label{thm:oriented-submodule-classification}
The assignment \(M\mapsto(L_M,\varepsilon_M)\) induces a bijection
\begin{equation}\label{eq:oriented-submodule-classification}
\SL_2(R)\backslash M_{2\times m}(R)
\xrightarrow{\ \sim\ }
\left\{\text{oriented two-generated submodules of }R^m\right\}.
\end{equation}
\end{theorem}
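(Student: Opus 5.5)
The plan is to check that the map is well defined on orbits, then surjectivity, then injectivity. Throughout I use that \(R=\Z/p^r\Z\) is a local principal ideal ring, and that \(\bigwedge_R^2L\) means the exterior square of the abstract module \(L\), not of \(R^m\).

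\emph{Well-definedness.} Let \(g=\begin{pmatrix}a&b\\c&d\end{pmatrix}\in\SL_2(R)\). Then \(gM\) has rows \(x'=ax+by\) and \(y'=cx+dy\).
\begin{itemize}
\item Since \(g\) is invertible, \(Rx'+Ry'=Rx+Ry\), so \(L_{gM}=L_M\).
\item In \(\bigwedge^2L_M\) one has \(x'\wedge y'=\det(g)\,x\wedge y=x\wedge y\).
\item Because \(x,y\) generate \(L_M\), the cyclic module \(\bigwedge^2L_M\) is generated by \(x\wedge y\). Hence \((L_M,\varepsilon_M)\) is an oriented two-generated submodule.
\end{itemize}

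\emph{Surjectivity.} Let \((L,\varepsilon)\) be given, and pick generators \(x,y\) of \(L\), allowing zeros. Both \(\varepsilon\) and \(x\wedge y\) generate the cyclic module \(\bigwedge^2L\cong R/p^s\). Hence \(\varepsilon=\bar u\,x\wedge y\) for some unit \(\bar u\) of \(R/p^s\). Lift \(\bar u\) to a unit \(u\in R\); this is possible since \(R\) is local. The matrix with rows \(ux\) and \(y\) then maps to \((L,\varepsilon)\).

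\emph{Injectivity.} This is the main step. Suppose \(M=(x;y)\) and \(M'=(x';y')\) have the same image \((L,\varepsilon)\). I proceed in two stages.

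First I show that \(\GL_2(R)\) acts transitively on generating pairs of \(L\), i.e.\ on surjections \(R^2\to L\).
\begin{itemize}
\item By the structure theory over the PIR \(R\), \(L\cong R/p^{a}\oplus R/p^{b}\) with \(a\le b\). Here \(a=0\) is allowed, covering the cyclic and zero cases.
\item I claim every generating pair is \(\GL_2(R)\)-equivalent to a fixed adapted pair \((e,f)\), with \(\operatorname{ann}(e)=p^aR\) and \(\operatorname{ann}(f)=p^bR\).
\item If \(\mu(L)=2\), two surjections \(\phi,\phi':R^2\to L\) are projective covers (Nakayama). Lift \(\phi'\) through \(\phi\) to \(h:R^2\to R^2\). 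Then \(h\) is surjective modulo the maximal ideal, hence lies in \(\GL_2(R)\).
\item If \(\mu(L)\le1\), first use a row operation to make one entry of the pair generate \(L\) and the other zero, then compare these normalized pairs directly.
\end{itemize}
This yields \(g\in\GL_2(R)\) with \(M'=gM\). Comparing orientations gives \(x\wedge y=x'\wedge y'=\det(g)\,x\wedge y\), so \(\det g\in 1+\operatorname{ann}(\bigwedge^2L)=1+p^{s}R\), where \(s=\min(a,b)=a\) when \(\mu(L)=2\).

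Second, I correct the determinant using the stabilizer of the pair. Transport to the adapted pair \((e,f)\), where the stabilizer contains \(\operatorname{diag}(1+p^at,1)\) for every \(t\in R\), because \(p^ae=0\). Its determinants exhaust \(1+p^aR\). So I can choose a stabilizer element \(k\) of \(M\) with \(\det k=\det(g)^{-1}\). Then \(gk\in\SL_2(R)\) and \(gkM=M'\). In the degenerate cases \(\bigwedge^2L=0\), \(\det g\) is an arbitrary unit. After normalizing \(y=0\), the stabilizer contains \(\operatorname{diag}(1,u)\) for all units \(u\), which again fixes the determinant.

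The point needing most care is the normal-form transitivity step. It must handle the cases \(\mu(L)<2\) uniformly, and it must match the intrinsic computation \(\bigwedge^2(R/p^a\oplus R/p^b)\cong R/p^{\min(a,b)}\) with the stabilizer determinants \(1+p^aR\).
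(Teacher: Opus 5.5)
Your proposal is correct and follows essentially the same route as the paper. Both arguments obtain well-definedness from $\det g=1$ and surjectivity by rescaling one generator by a unit. For injectivity, both first produce $g\in\GL_2(R)$ with $M'=gM$ (invertible because it is invertible modulo $p$), then use the orientation to force $\det g\equiv 1$ modulo $\operatorname{Ann}\bigl(\bigwedge_R^2 L\bigr)$, and finally correct by a stabilizer element such as $\operatorname{diag}(1+p^at,1)$ or $\operatorname{diag}(1,u)$. The differences are minor: the paper treats the cyclic case by completing a unimodular column directly in $\SL_2(R)$ and computes the full determinant image of the stabilizer via Smith normal form, whereas you handle all cases uniformly through an adapted generating pair and use only the needed inclusion $1+p^aR\subseteq\det\bigl(\Stab_{\GL_2(R)}(M)\bigr)$.
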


\begin{proof}
Since \(x,y\) generate \(L_M\), the element \(\varepsilon_M\) generates the
cyclic module \(\bigwedge_R^2L_M\).

Left multiplication does not change the row module, and for
\(S\in\SL_2(R)\) it sends \(x\wedge y\) to
\(\det(S)x\wedge y=x\wedge y\). Thus the assignment is constant on orbits.
It is surjective by choosing a generating pair \(x,y\) of \(L\) whose wedge
is the prescribed generator. Indeed, starting from any generating pair, any
other generator of the cyclic module \(\bigwedge_R^2L\) is obtained by
multiplication by a unit of \(R\), and rescaling one member of the pair by
that unit realizes it.

It remains to prove injectivity. Suppose that \(M\) and \(M'\) have the same
row module \(L\) and the same orientation. If \(L=0\), there is nothing to
prove. If \(L=Rz\) is cyclic, write the rows of \(M\) as \(az,bz\). Since
they generate \(L\), the column \((a,b)^{\mathsf T}\) is unimodular. It can
be completed to a matrix in \(\SL_2(R)\), so every generating pair of \(L\)
lies in the orbit of \((z,0)\).

Assume now that \(L\) needs two generators. Expressing the rows of \(M'\) in
terms of those of \(M\) gives a matrix \(U\in M_2(R)\). Modulo \(p\), the
two pairs are bases of \(L/pL\); hence \(U\in\GL_2(R)\). The equality of the
orientations says
\begin{equation}\label{eq:determinant-fixes-orientation}
(\det U)\varepsilon_M=\varepsilon_M.
\end{equation}
Write
\[
L\simeq \Z/p^a\Z\oplus\Z/p^b\Z,
\qquad r\geq a\geq b\geq1.
\]
Then \(\bigwedge_R^2L\simeq\Z/p^b\Z\), so the units fixing
\(\varepsilon_M\) are precisely those congruent to \(1\) modulo \(p^b\).
The determinant image of the stabilizer in \(\GL_2(R)\) of a generating pair
of \(L\) is exactly this subgroup. To see this explicitly, use invertible row
and column operations to put the generating matrix in Smith normal form
\[
M_0=
\begin{pmatrix}
p^{r-a}&0&0&\cdots&0\\
0&p^{r-b}&0&\cdots&0
\end{pmatrix}.
\]
Left row operations conjugate the stabilizer inside \(\GL_2(R)\), while right
column operations leave it unchanged. Hence neither operation changes its
determinant image. If
\[
H=\begin{pmatrix}h_{11}&h_{12}\\h_{21}&h_{22}\end{pmatrix}
\in\GL_2(R)
\]
satisfies \(HM_0=M_0\), then
\[
h_{11}\equiv1\pmod{p^a},\qquad
h_{21}\equiv0\pmod{p^a},\qquad
h_{12}\equiv0\pmod{p^b},\qquad
h_{22}\equiv1\pmod{p^b}.
\]
Hence \(\det H\equiv1\pmod{p^b}\). Conversely, if
\(q\equiv1\pmod{p^b}\), then
\(\operatorname{diag}(1,q)M_0=M_0\), and this stabilizer element has
determinant \(q\). This proves the asserted description of the determinant
image.

By \eqref{eq:determinant-fixes-orientation}, there is therefore a stabilizer
element \(H\) with \(\det H=(\det U)^{-1}\). The matrix \(UH\) belongs to
\(\SL_2(R)\) and carries \(M\) to \(M'\), proving injectivity.
\end{proof}

Together with Theorem~\ref{thm:tensor-power-commutant}, the equivariant
identification \(Z_t(A)\simeq M_{2\times m}(R)\) gives
\begin{equation}\label{eq:cyclic-dimension-matrix-orbits}
\dim\mathfrak D_t(A)
=
\left|\SL_2(R)\backslash M_{2\times m}(R)\right|.
\end{equation}

For integers \(0\leq j\leq m\), write
\[
\qbinom{m}{j}_p
:=
\prod_{i=0}^{j-1}
\frac{p^{m-i}-1}{p^{j-i}-1}
\]
for the Gaussian binomial coefficient.

For \(t\geq1\) and \(N\geq1\), write
\[
d_t(N):=\dim\mathfrak D_t(\Z/N\Z).
\]
Since \(Z_1(\Z/N\Z)=\{0\}\),
Theorem~\ref{thm:tensor-power-commutant} gives \(d_1(N)=1\).

Fix a prime \(p\) and \(t\geq2\), and put \(m=t-1\). For \(a\geq1\), set
\begin{equation}\label{eq:number-cyclic-submodules}
N_m(a)
:=
p^{(a-1)(m-1)}\qbinom{m}{1}_p.
\end{equation}
For \(m\geq2\) and \(a\geq b\geq1\), set
\begin{equation}\label{eq:number-two-generated-submodules}
N_m(a,b)
:=
\begin{cases}
p^{2(a-1)(m-2)}\qbinom{m}{2}_p,
&a=b,\\[2mm]
p^{(2b-1)(m-2)+(a-b-1)(m-1)}
\qbinom{m-1}{1}_p\qbinom{m}{1}_p,
&a>b.
\end{cases}
\end{equation}

\begin{theorem}
\label{thm:cyclic-commutant-dimension}
For every \(r\geq1\),
\begin{equation}\label{eq:exact-prime-power-dimension}
d_t(p^r)
=
1+
\sum_{a=1}^rN_m(a)
+
\sum_{1\leq b\leq a\leq r}
\varphi(p^b)N_m(a,b),
\end{equation}
where the last sum is omitted when \(m=1\).
\end{theorem}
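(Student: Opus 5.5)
By \eqref{eq:cyclic-dimension-matrix-orbits} and Theorem~\ref{thm:oriented-submodule-classification}, \(d_t(p^r)\) equals the number of oriented two-generated submodules \((L,\varepsilon)\) of \(R^m\), \(R=\Z/p^r\Z\). We sort these by the isomorphism type of \(L\). Every submodule of \(R^m\) is a finite abelian \(p\)-group of exponent at most \(p^r\). Those generated by at most two elements are of three kinds: \(L=0\); \(L\simeq\Z/p^a\Z\) cyclic with \(1\le a\le r\); or \(L\simeq\Z/p^a\Z\oplus\Z/p^b\Z\) with \(r\ge a\ge b\ge1\).

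The number of orientations in each case is read off from \(\bigwedge^2_R L\).

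\begin{itemize}
\item If \(L=0\) or \(L\) is cyclic, then \(\bigwedge^2_RL=0\). By the convention after \eqref{eq:row-module-orientation}, there is exactly one orientation.
\item If \(L\simeq\Z/p^a\Z\oplus\Z/p^b\Z\), the proof of Theorem~\ref{thm:oriented-submodule-classification} gives \(\bigwedge^2_RL\simeq\Z/p^b\Z\). Its generators number \(\varphi(p^b)\).
\end{itemize}

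Hence
\[
d_t(p^r)=1+\sum_{a=1}^r\#\{L\le R^m:L\simeq\Z/p^a\Z\}+\sum_{1\le b\le a\le r}\varphi(p^b)\,\#\{L\le R^m:L\simeq\Z/p^a\Z\oplus\Z/p^b\Z\}.
\]
When \(m=1\), \(R^m=R\) has no noncyclic submodules, so the last sum disappears. It remains to show that the two counts equal \(N_m(a)\) and \(N_m(a,b)\).

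Since \(a\le r\), a subgroup of \(R^m\simeq(\Z/p^r\Z)^m\) of type \(\mu\) is the same as a subgroup of type \(\mu\) inside the abelian \(p\)-group of type \((r^m)\). Birkhoff's formula counts these:
\[
\#\{\text{subgroups of type }\mu\text{ in type }\lambda\}
=\prod_{i\ge1}p^{\mu'_{i+1}(\lambda'_i-\mu'_i)}\qbinom{\lambda'_i-\mu'_{i+1}}{\mu'_i-\mu'_{i+1}}_p,
\]
where \(\lambda',\mu'\) are the conjugate partitions. Here \(\lambda'_i=m\) for every \(i\le r\).

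For the cyclic count, \(\mu=(a)\) gives \(\mu'_i=1\) for \(i\le a\) and \(0\) otherwise. Each index \(i<a\) contributes \(p^{m-1}\). The index \(i=a\) contributes \(\qbinom{m}{1}_p\), and indices \(i>a\) contribute \(1\). The product is \(N_m(a)\).

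For \(\mu=(a,a)\), we have \(\mu'_i=2\) for \(i\le a\). The indices \(i<a\) give \(p^{2(m-2)}\) each, and \(i=a\) gives \(\qbinom{m}{2}_p\). This yields the \(a=b\) case of \(N_m(a,b)\).

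For \(a>b\), \(\mu'_i=2\) for \(i\le b\), \(\mu'_i=1\) for \(b<i\le a\), and \(0\) beyond. The factors are:
\begin{itemize}
\item \(i<b\): \(p^{2(m-2)}\) each;
\item \(i=b\): \(p^{m-2}\qbinom{m-1}{1}_p\);
\item \(b<i<a\): \(p^{m-1}\) each;
\item \(i=a\): \(\qbinom{m}{1}_p\).
\end{itemize}
Collecting the exponents gives \(2(b-1)(m-2)+(m-2)+(a-b-1)(m-1)=(2b-1)(m-2)+(a-b-1)(m-1)\), which is \(N_m(a,b)\).

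The main obstacle is getting Birkhoff's formula in a correct normalization and checking the boundary indices \(i=b\) and \(i=a\). As an independent check, we would count surjections from \(\Z/p^a\oplus\Z/p^b\) onto the subgroup and divide by the automorphism group. We would also test small cases: \(m=1\) should recover \(d_2(p^r)=r+1=\tau(p^r)\), and \(m=2\), \(r=1\) should recover \(d_3(p)=2p+1\).
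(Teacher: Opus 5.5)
Your proposal is correct and follows essentially the same route as the paper. You reduce to counting oriented two-generated submodules via Theorem~\ref{thm:oriented-submodule-classification}, count orientations using \(\bigwedge^2_R L\simeq\Z/p^b\Z\) (one orientation when \(L\) is zero or cyclic), and count submodules of types \((a)\) and \((a,b)\) by Birkhoff's formula with \(\lambda=(r^m)\). Your index-by-index evaluation, including the boundary indices \(i=b\) and \(i=a\), spells out the substitution the paper leaves implicit, and it checks out.
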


\begin{proof}
If \(\lambda\) and \(\mu\) are partitions, with conjugate partitions
\(\lambda'\) and \(\mu'\), then Birkhoff's formula
\cite{Birkhoff1935} for the number of subgroups of type \(\mu\) in an abelian
\(p\)-group of type \(\lambda\) is
\begin{equation}\label{eq:birkhoff-subgroup-count}
\alpha_\lambda(\mu;p)
=
p^{\sum_{i\geq1}\mu'_{i+1}(\lambda'_i-\mu'_i)}
\prod_{i\geq1}
\qbinom{\lambda'_i-\mu'_{i+1}}{\mu'_i-\mu'_{i+1}}_p
\end{equation}

For the ambient type \(\lambda=(r^m)\), substituting \(\mu=(a)\) in
\eqref{eq:birkhoff-subgroup-count} gives \(N_m(a)\), the number of cyclic
submodules of \(R^m\) of order \(p^a\). Substituting \(\mu=(a,b)\) gives
\(N_m(a,b)\), the number of submodules of type
\[
\Z/p^a\Z\oplus\Z/p^b\Z,
\qquad a\geq b\geq1,
\]
respectively. Moreover,
\[
\bigwedge_R^2L\simeq\Z/p^b\Z
\]
has \(\varphi(p^b)\) generators. Theorem~\ref{thm:oriented-submodule-classification}
and \eqref{eq:cyclic-dimension-matrix-orbits} therefore give
\eqref{eq:exact-prime-power-dimension}, with the initial term accounting for
the zero submodule.
\end{proof}

If \(N=\prod_{p^r\parallel N}p^r\), the Chinese remainder theorem identifies
\[
\SL_2(\Z/N\Z)
\simeq
\prod_{p^r\parallel N}\SL_2(\Z/p^r\Z)
\]
and decomposes the matrix space into the corresponding product. An orbit of a
product action is a product of orbits. Hence
\begin{equation}\label{eq:cyclic-dimension-multiplicativity}
d_t(N)
=
\prod_{p^r\parallel N}d_t(p^r).
\end{equation}

\begin{remark}\label{rem:cyclic-low-moments}
For \(t=2\), one has \(m=1\), and
\eqref{eq:exact-prime-power-dimension} and
\eqref{eq:cyclic-dimension-multiplicativity} give
\[
d_2(N)=\tau(N),
\]
where \(\tau(N)\) is the number of positive divisors of \(N\). For \(t=3\),
the same equations give
\begin{equation}\label{eq:cyclic-third-moment}
d_3(p^r)
=
p^{r-1}\bigl((r+1)p+r\bigr),
\qquad
d_3(N)
=
\prod_{p^r\parallel N}
p^{r-1}\bigl((r+1)p+r\bigr).
\end{equation}
In particular,
\[
d_3(2)=5,
\qquad
d_3(4)=16,
\qquad
d_3(8)=44,
\qquad
d_3(9)=33.
\]
\end{remark}

\begin{remark}\label{rem:clifford-designs}
Assume \(A\neq0\), and choose unitary representatives \(U_g\) of the
projective Weil representation.
The phase-independent averaged trace moment
\begin{equation}\label{eq:frame-potential-commutant}
\operatorname{FP}_t(C(A))
:=
\frac{1}{|C(A)|}
\sum_{g\in C(A)}|\Tr(U_g)|^{2t}
=
\dim\mathfrak D_t(A).
\end{equation}
The left-hand side is commonly called the \(t\)-th frame potential. To verify
the equality, average the conjugation operators
\[
X\longmapsto
U_g^{\otimes t}X(U_g^*)^{\otimes t}
\]
on \(\End_\C(\HH^{\otimes t})\). Their average is the orthogonal projection
onto \(\mathfrak D_t(A)\), while the trace of the conjugation operator
associated with \(g\) is \(|\Tr(U_g)|^{2t}\).

The uniform collection of projective unitaries
\(\{[U_g]:g\in C(A)\}\) is called a unitary \(t\)-design if this averaging
operator agrees with the corresponding average over \(U(\HH)\) with respect
to normalized Haar measure. Since
\[
\End_{U(\HH)}(\HH^{\otimes t})
\subseteq
\mathfrak D_t(A),
\]
this is equivalent to
\begin{equation}\label{eq:design-commutant-criterion}
\dim\mathfrak D_t(A)
=
\dim\End_{U(\HH)}(\HH^{\otimes t}).
\end{equation}

For \(t=2\), the right-hand side of
\eqref{eq:design-commutant-criterion} has dimension \(2\) by Schur--Weyl
duality. Theorem~\ref{thm:tensor-power-commutant} therefore gives
\[
\{[U_g]:g\in C(A)\}\text{ is a unitary }2\text{-design}
\quad\Longleftrightarrow\quad
A\simeq(\Z/p\Z)^n
\]
for some prime \(p\) and \(n\geq1\). Indeed, the second commutant has dimension
equal to the number of symplectic orbits on \(V_A\), and there are only the
zero and nonzero orbits exactly in the elementary-abelian case
\cite{GraydonSkanesNormanWallman2021}.

Every unitary \(3\)-design is a unitary \(2\)-design, so it remains to consider
\(A=(\Z/p\Z)^n\). In this case, the orbits on
\(Z_3(A)\simeq V_A^2\) give
\[
\dim\mathfrak D_3(A)
=
\begin{cases}
2p+1,&n=1,\\
2p+2,&n\geq2.
\end{cases}
\]
Indeed, the zero pair forms one orbit. The nonzero linearly dependent pairs
are parametrized by \(\mathbb P^1(\F_p)\), the independent pairs with nonzero
symplectic product are parametrized by its value in \(\F_p^\times\), and for
\(n\geq2\) there is one additional orbit of independent isotropic pairs. Thus
the orbit counts are \(1+(p+1)+(p-1)=2p+1\) for \(n=1\), and one larger for
\(n\geq2\). By Schur--Weyl duality,
\[
\dim\End_{U(\HH)}(\HH^{\otimes3})
=
\begin{cases}
5,&\dim\HH=2,\\
6,&\dim\HH\geq3.
\end{cases}
\]
Consequently,
\[
\{[U_g]:g\in C(A)\}\text{ is a unitary }3\text{-design}
\quad\Longleftrightarrow\quad
A\simeq(\Z/2\Z)^n.
\]
This recovers the multiqubit result of \cite{Zhu2017}. Finally, every unitary
\(4\)-design is a unitary \(3\)-design, so only the case
\(A\simeq(\Z/2\Z)^n\) could occur. It is excluded by the known higher-moment
obstruction
\cite{ZhuKuengGrasslGross2016}.
\end{remark}

\subsection{Adjoint-action commutants and tensor transport}

For \(t\ge 0\), let
\[
\mathcal C_t^{\mathrm{cl}}:=\End_{\ASp(A)}(\C[V_A]^{\otimes t}),
\qquad
\mathcal C_t^{\mathrm{q}}:=\End_{C(A)}(\cA^{\otimes t}),
\]
where \(\C[V_A]\) and \(\cA=\C_{\beta_A}[V_A]\) carry the actions
\eqref{eq:split-action-correct} and \eqref{eq:alpha-action}.
Composition and tensor product make \(\mathcal C_*^{\mathrm{cl}}\) and
\(\mathcal C_*^{\mathrm{q}}\) into multiplicative sequences of algebras.

\begin{theorem}
\label{thm:commutant-invariance}
For every finite abelian group \(A\), the tensor isomorphism
\[
\cF:\Rep^\omega(\ASp(A))\xrightarrow{\sim}\Rep(C(A))
\]
induces algebra isomorphisms
\[
\Phi_t:\mathcal C_t^{\mathrm{cl}}\xrightarrow{\sim}\mathcal C_t^{\mathrm{q}},
\qquad t\ge 0,
\]
compatible with the multiplicative structure.
\end{theorem}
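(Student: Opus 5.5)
The plan is to define \(\Phi_t\) as conjugation by the iterated tensor constraint. By Theorem~\ref{thm:weyl-transport}, \(\cF(\C[V_A])\cong\cA\) as algebra objects of \(\Rep(C(A))\). Since \(\cF\) is a tensor functor, the iterated constraint
\[
J^{(t)}:\cF(\C[V_A])^{\otimes t}\longrightarrow\cF(\C[V_A]^{\otimes t})
\]
is a \(C(A)\)-equivariant isomorphism. On the Weyl basis it acts diagonally by \(u_{\mathbf v}\mapsto\kappa_t(\mathbf v)\delta_{\mathbf v}\), as in the proof of Theorem~\ref{thm:tensor-power-commutant}. For \(t=0\) we take \(J^{(0)}\) to be the unit identification, and both commutants are \(\C\). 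For \(f\in\mathcal C_t^{\mathrm{cl}}\) we set
\[
\Phi_t(f):=(J^{(t)})^{-1}\circ\cF(f)\circ J^{(t)}\in\End_{C(A)}(\cA^{\otimes t}).
\]
Because \(\cF\) is an isomorphism of categories, hence fully faithful and bijective on Hom spaces, \(f\mapsto\cF(f)\) is an algebra isomorphism
\[
\End_{\ASp(A)}(\C[V_A]^{\otimes t})\longrightarrow\End_{C(A)}\bigl(\cF(\C[V_A]^{\otimes t})\bigr).
\]
Conjugation by the fixed isomorphism \(J^{(t)}\) is again an algebra isomorphism. So each \(\Phi_t\) is an algebra isomorphism, and its inverse is given by the same formula with \(\cF^{-1}\) and \(K\).

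Next I will verify compatibility with the multiplicative structure. Composition is already handled by the previous step. For tensor products, take \(f\in\mathcal C_s^{\mathrm{cl}}\) and \(g\in\mathcal C_t^{\mathrm{cl}}\); the goal is \(\Phi_{s+t}(f\otimes g)=\Phi_s(f)\otimes\Phi_t(g)\). Two inputs give this:
\begin{itemize}
\item naturality of \(J\), which gives \(J_{X,Y}\circ(\cF(f)\otimes\cF(g))=\cF(f\otimes g)\circ J_{X,Y}\) with \(X=\C[V_A]^{\otimes s}\) and \(Y=\C[V_A]^{\otimes t}\);
\item the coherence (pentagon) identity \(J^{(s+t)}=J_{X,Y}\circ(J^{(s)}\otimes J^{(t)})\), which holds by Theorem~\ref{thm:tensor-equivalence}.
\end{itemize}
Substituting the second into the definition of \(\Phi_{s+t}\) and applying the first cancels the middle \(J_{X,Y}\), giving the claim. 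Unit compatibility, \(\Phi_0=\id_\C\), holds because the unit map of \(\cF\) is the identity.

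The main obstacle I expect is bookkeeping for the parenthesization and the iterated constraint, in particular checking that \(J^{(s+t)}\) factors as above. Concretely, one needs
\[
\kappa_{s+t}(\mathbf v,\mathbf w)=\kappa_s(\mathbf v)\,\kappa_t(\mathbf w)\,\beta_A\bigl(\Sigma_s(\mathbf v),\Sigma_t(\mathbf w)\bigr),
\]
which follows from bimultiplicativity of \(\beta_A\). If the paper intends compatibility also with the symmetric-group action permuting tensor factors, I would add one more step. By Theorem~\ref{thm:symmetric-equivalence}, \(\cF\) sends \(c^\omega\) to the ordinary flip, so \(\Phi_t\) carries the permutation action built from \(c^\omega\) on \(\C[V_A]^{\otimes t}\) to the ordinary permutation action on \(\cA^{\otimes t}\). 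This is why the statement is phrased with \(\Rep^\omega(\ASp(A))\).
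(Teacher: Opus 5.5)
Your proposal is correct and follows the paper's argument. The paper obtains \(\Phi_t\) the same way: it combines the bijectivity of \(\cF\) on endomorphism algebras, the identification of tensor powers through the strong tensor structure, and \(\cF(\C[V_A])\cong\cA\), with coherence giving compatibility with tensor products; it later notes that \(\Phi_t\) is conjugation by the diagonal operator \(u_{\mathbf v}\mapsto\kappa_t(\mathbf v)\delta_{\mathbf v}\), which is exactly your definition, and your check \(\kappa_{s+t}(\mathbf v,\mathbf w)=\kappa_s(\mathbf v)\kappa_t(\mathbf w)\beta_A(\Sigma_s(\mathbf v),\Sigma_t(\mathbf w))\) simply makes explicit the coherence step the paper leaves implicit (the symmetric-group remark is not needed, since the paper states that only the tensor structure is used).
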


\begin{proof}
Since \(\cF\) is an isomorphism of categories, it identifies the endomorphism algebras of any
object and its image. Since it is a strong tensor functor, it also identifies
tensor powers. Finally, Theorem~\ref{thm:weyl-transport} gives an isomorphism
of algebra objects \(\cF(\C[V_A])\cong \cA\). Combining these facts yields the
algebra isomorphisms \(\Phi_t\), and coherence of the tensor structure shows
that they commute with tensor products of endomorphisms.
\end{proof}

Only the tensor structure of \(\cF\) is used in this endomorphism-algebra
identification. The twisted symmetry is needed for the symmetric comparison in
Section~\ref{subsec:twisted-symmetry}, but not for the endomorphism-algebra
isomorphisms in Theorem~\ref{thm:commutant-invariance}.

The adjoint-action commutant is related to the tensor-power
commutant by trace duality.

\begin{proposition}\label{prop:trace-doubling}
For every \(t\geq0\), the trace pairing on \(\cA\simeq\End_\C(\HH)\) induces a
vector-space isomorphism
\begin{equation}\label{eq:trace-doubling}
\mathcal C_t^{\mathrm{q}}
\xrightarrow{\ \sim\ }
\mathfrak D_{2t}(A).
\end{equation}
Under this isomorphism, composition in \(\mathcal C_t^{\mathrm{q}}\) becomes a
contraction product on \(\mathfrak D_{2t}(A)\), rather than the ordinary
multiplication inherited from \(\cA^{\otimes 2t}\).
\end{proposition}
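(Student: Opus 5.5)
The argument is standard linear-algebra duality, made equivariant. First, \(\cA\) is self-dual as a \(C(A)\)-module through the nondegenerate trace pairing \(\langle x,y\rangle:=\Tr(xy)\) on \(\cA\simeq\End_\C(\HH)\). This pairing is \(C(A)\)-invariant, because \(\alpha_g=\Ad(U_g)\) and the trace is conjugation invariant:
\[
\Tr(\alpha_g(x)\alpha_g(y))=\Tr(U_gxyU_g^{-1})=\Tr(xy).
\]
Hence \(x\mapsto\Tr(x\,\cdot\,)\) is an isomorphism of \(C(A)\)-modules \(\cA\xrightarrow{\sim}\cA^*\). In the Weyl basis this reads \(\Tr(u_vu_w)=|A|\,\beta_A(v,-v)\,\delta_{v+w,0}\), since \(u_vu_{-v}=\beta_A(v,-v)u_0\). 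This gives an explicit formula, but only invariance is needed.

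Next, I use the canonical linear isomorphism \(\End_\C(\cA^{\otimes t})\cong(\cA^{\otimes t})^*\otimes\cA^{\otimes t}\), which is \(C(A)\)-equivariant for the conjugation action on the left and the diagonal action on the right. Composing with the self-duality of the first paragraph, applied factorwise, gives an equivariant linear isomorphism \(\End_\C(\cA^{\otimes t})\cong\cA^{\otimes 2t}\). Concretely, an operator \(\Phi\) corresponds to the tensor \(\sum_i e_i^\vee\otimes\Phi(e_i)\), where \(\{e_i\}\) is a basis of \(\cA^{\otimes t}\) and \(\{e_i^\vee\}\) is its trace-dual basis. Equivariance of this identification for \(g\) follows from invariance of the pairing, since \(\{\alpha_g(e_i)\}\) and \(\{\alpha_g(e_i^\vee)\}\) are again dual bases. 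Taking \(C(A)\)-invariants gives
\[
\mathcal C_t^{\mathrm q}=\End_\C(\cA^{\otimes t})^{C(A)}\cong(\cA^{\otimes 2t})^{C(A)}=\mathfrak D_{2t}(A),
\]
where the last equality is \eqref{eq:tensor-commutant-as-invariants} with \(2t\) in place of \(t\). For \(t=0\) both sides are \(\C\).

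Finally, I transport composition. If \(\Phi\leftrightarrow\sum_i a_i\otimes b_i\) and \(\Psi\leftrightarrow\sum_j c_j\otimes d_j\), then \(\Psi\circ\Phi\) corresponds to \(\sum_{i,j}\Tr(b_ic_j)\,a_i\otimes d_j\). This is a contraction of the last \(t\) factors of the first tensor against the first \(t\) factors of the second. It is visibly different from the product in \(\cA^{\otimes2t}\), which justifies the last sentence of the statement.

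The only delicate point is bookkeeping: the trace pairing must be genuinely \(C(A)\)-invariant even though the Weil representation is only projective. This holds because \(\alpha_g\) is a strict action implemented by conjugation, so scalar ambiguities in \(U_g\) cancel. As a sanity check, the dimension count is consistent with Theorem \ref{thm:tensor-power-commutant} and Theorem \ref{thm:commutant-invariance}. By the \(\ASp(A)\) side, \(\dim\mathcal C_t^{\mathrm{cl}}\) is the number of \(\Sp(V_A)\)-orbits on pairs \((\mathbf v,\mathbf w)\in V_A^t\times V_A^t\) with equal sums. The map \((\mathbf v,\mathbf w)\mapsto(-\mathbf v,\mathbf w)\) identifies these equivariantly with \(Z_{2t}(A)\).
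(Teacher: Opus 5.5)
Your proposal is correct and follows essentially the same route as the paper: identify \(\End_\C(\cA^{\otimes t})\) with \(\cA^{\otimes t}\otimes(\cA^{\otimes t})^*\), use the nondegenerate \(C(A)\)-invariant trace pairing to replace \(\cA^*\) by \(\cA\), take invariants to get \(\mathfrak D_{2t}(A)\), and read composition as contraction through the trace pairing. You also spell out two points the paper only asserts: that the pairing is invariant because \(\alpha_g=\Ad(U_g)\), and the explicit contraction formula for the composite.
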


\begin{proof}
There are standard identifications
\[
\End_\C(\cA^{\otimes t})
\simeq
\cA^{\otimes t}\otimes(\cA^{\otimes t})^*
\]
and
\[
\mathcal C_t^{\mathrm{q}}
\simeq
\bigl(\cA^{\otimes t}\otimes(\cA^{\otimes t})^*\bigr)^{C(A)}.
\]
The trace pairing \((x,y)\mapsto\Tr(xy)\) is nondegenerate and
\(C(A)\)-invariant. It therefore identifies
\(\cA^*\) equivariantly with \(\cA\), and gives
\[
\mathcal C_t^{\mathrm{q}}
\simeq
(\cA^{\otimes 2t})^{C(A)}
=
\mathfrak D_{2t}(A).
\]

Composition of endomorphisms contracts the dual and primal tensor factors
through the trace pairing, which gives the contraction product described in
Proposition~\ref{prop:trace-doubling}.
\end{proof}

For \(t\geq1\), define
\begin{equation}\label{eq:Gamma-t}
\Gamma_t(A)
:=
\{(\mathbf x,\mathbf y)\in V_A^t\times V_A^t:
\Sigma_t(\mathbf x)=\Sigma_t(\mathbf y)\}.
\end{equation}

On the Weyl basis one has
\[
\Tr(u_vu_w)=|A|\,\beta_A(v,w)\delta_{v+w,0}.
\]
Thus the element corresponding to the coordinate functional dual to \(u_v\)
is
\[
u_v^\vee
=
\frac{1}{|A|\,\beta_A(v,-v)}u_{-v}.
\]
Consequently, on the Weyl tensor basis the trace identification sends
\[
u_{\mathbf x}\otimes u_{\mathbf y}^*
\longmapsto
|A|^{-t}
\left(\prod_{i=1}^t\beta_A(y_i,-y_i)^{-1}\right)
u_{(x_1,\ldots,x_t,-y_1,\ldots,-y_t)}.
\]
The defining equality in \eqref{eq:Gamma-t} is precisely the zero-sum
condition for
\((x_1,\ldots,x_t,-y_1,\ldots,-y_t)\). Consequently,
\begin{equation}\label{eq:Gamma-zero-sum-identification}
\Gamma_t(A)\xrightarrow{\ \sim\ }Z_{2t}(A),
\qquad
(\mathbf x,\mathbf y)\longmapsto
(x_1,\ldots,x_t,-y_1,\ldots,-y_t),
\end{equation}
is an \(\Sp(V_A)\)-equivariant bijection.

\subsection{The adjoint orbit basis}\label{subsec:classical-commutant-explicit}

We now compute the affine symplectic commutant appearing in
Theorem~\ref{thm:commutant-invariance}. Identify
\(\C[V_A]^{\otimes t}\) with \(\C[V_A^t]\), with basis
\(\{\delta_{\mathbf v}\}_{\mathbf v\in V_A^t}\), where
\(\mathbf v=(v_1,\dots,v_t)\). The diagonal action of
\(\ASp(A)\) is
\begin{equation}\label{eq:diagonal-action}
(T,\chi)\cdot\delta_{\mathbf v}
=
\Bigl(\prod_{i=1}^t\chi(v_i)\Bigr)\delta_{T\mathbf v},
\qquad (T,\chi)\in \ASp(A).
\end{equation}

The equivariant bijection \eqref{eq:Gamma-zero-sum-identification} identifies
the symplectic orbit set of \(\Gamma_t(A)\) with
\(\Sp(V_A)\backslash Z_{2t}(A)\).

\begin{theorem}
\label{thm:adjoint-orbit-basis}
Let \(\operatorname{Orb}_t(A)\) be the set of \(\Sp(V_A)\)-orbits on
\(\Gamma_t(A)\). For \(\alpha\in\operatorname{Orb}_t(A)\), put
\begin{equation}\label{eq:orbit-basis}
e_\alpha:=
\sum_{(\mathbf x,\mathbf y)\in\alpha}
\delta_{\mathbf x}\otimes\delta_{\mathbf y}^*,
\qquad
E_\alpha:=\Phi_t(e_\alpha).
\end{equation}
Then \(\{e_\alpha\}_{\alpha\in\operatorname{Orb}_t(A)}\) and
\(\{E_\alpha\}_{\alpha\in\operatorname{Orb}_t(A)}\) are bases of
\(\mathcal C_t^{\mathrm{cl}}\) and \(\mathcal C_t^{\mathrm{q}}\),
respectively. In particular,
\begin{equation}\label{eq:dim-formula}
\dim\mathcal C_t^{\mathrm{cl}}
=
\dim\mathcal C_t^{\mathrm{q}}
=
\#\operatorname{Orb}_t(A).
\end{equation}
\end{theorem}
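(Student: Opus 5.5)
The plan is to compute \(\mathcal C_t^{\mathrm{cl}}\) directly as an invariant space and then transport it with Theorem~\ref{thm:commutant-invariance}. First identify
\[
\End_\C(\C[V_A^t])\simeq\C[V_A^t]\otimes\C[V_A^t]^*,
\]
with basis \(\delta_{\mathbf x}\otimes\delta_{\mathbf y}^*\), where \(\delta_{\mathbf y}^*\) is the coordinate functional. Then \(\mathcal C_t^{\mathrm{cl}}\) is the space of \(\ASp(A)\)-invariants for the conjugation action \(g\cdot X=gXg^{-1}\). By \eqref{eq:diagonal-action}, the dual action satisfies
\[
(T,\chi)\cdot\delta_{\mathbf y}^*=\chi(\Sigma_t(\mathbf y))^{-1}\delta_{T\mathbf y}^*.
\]
Indeed, \((T,\chi)^{-1}\) sends \(\delta_{T\mathbf y}\) to a multiple of \(\delta_{\mathbf y}\), and the scalar inverts. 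Hence
\[
(T,\chi)\cdot(\delta_{\mathbf x}\otimes\delta_{\mathbf y}^*)
=\chi\bigl(\Sigma_t(\mathbf x)-\Sigma_t(\mathbf y)\bigr)\,\delta_{T\mathbf x}\otimes\delta_{T\mathbf y}^*.
\]

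Next, impose invariance in two stages, exactly as in the proof of Theorem~\ref{thm:tensor-power-commutant}. Under the normal subgroup \(\widehat{V_A}\), each basis vector is a weight vector of weight \(\Sigma_t(\mathbf x)-\Sigma_t(\mathbf y)\). An invariant vector must therefore be supported on pairs with \(\Sigma_t(\mathbf x)=\Sigma_t(\mathbf y)\), that is, on \(\Gamma_t(A)\). This uses that distinct characters of \(\widehat{V_A}\) are distinct, by Pontryagin duality. On \(\Gamma_t(A)\) the scalar is \(1\), and \(\Sp(V_A)\) acts by the diagonal permutation \((\mathbf x,\mathbf y)\mapsto(T\mathbf x,T\mathbf y)\), which preserves \(\Gamma_t(A)\) because \(T\) is additive. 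The invariants of a permutation representation are spanned by the orbit sums. These have disjoint supports and are therefore linearly independent. So \(\{e_\alpha\}\) is a basis of \(\mathcal C_t^{\mathrm{cl}}\).

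Finally, Theorem~\ref{thm:commutant-invariance} gives a linear (indeed algebra) isomorphism \(\Phi_t\). It sends a basis to a basis, so \(\{E_\alpha\}\) is a basis of \(\mathcal C_t^{\mathrm{q}}\), and \eqref{eq:dim-formula} follows. As a consistency check, \(\#\operatorname{Orb}_t(A)=|\Sp(V_A)\backslash Z_{2t}(A)|\) via \eqref{eq:Gamma-zero-sum-identification}. This agrees with Proposition~\ref{prop:trace-doubling} and Theorem~\ref{thm:tensor-power-commutant}.

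The only step requiring care is the sign of the weight on the dual factor. The condition must come out as \(\Sigma_t(\mathbf x)=\Sigma_t(\mathbf y)\) and not as a sum condition. I will verify it by directly checking that \(gXg^{-1}=X\) on the elementary matrix \(\delta_{\mathbf x}\otimes\delta_{\mathbf y}^*\), which maps \(\delta_{\mathbf y}\mapsto\delta_{\mathbf x}\). Everything else is the standard orbit-sum argument.
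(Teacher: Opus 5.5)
Your proposal is correct and matches the paper's proof step for step. Like the paper, you compute the action on the matrix units \(\delta_{\mathbf x}\otimes\delta_{\mathbf y}^*\), use \(\widehat{V_A}\)-invariance to restrict to \(\Gamma_t(A)\), take the \(\Sp(V_A)\)-orbit sums, and transport by \(\Phi_t\). Your explicit check of the inverse scalar on the dual factor supplies a detail the paper states without comment.
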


\begin{proof}
On the matrix units one has
\[
(T,\chi)\cdot(\delta_{\mathbf x}\otimes\delta_{\mathbf y}^*)
=
\chi\bigl(\Sigma_t(\mathbf x)-\Sigma_t(\mathbf y)\bigr)
\delta_{T\mathbf x}\otimes\delta_{T\mathbf y}^*.
\]
Invariance under \(\widehat{V_A}\) therefore forces
\((\mathbf x,\mathbf y)\in\Gamma_t(A)\). On this set the remaining
\(\Sp(V_A)\)-action simply permutes the matrix units, so its invariant space
has the orbit-sum basis \eqref{eq:orbit-basis}. Applying \(\Phi_t\) from
Theorem~\ref{thm:commutant-invariance} gives the basis
\(\{E_\alpha\}_{\alpha\in\operatorname{Orb}_t(A)}\) and the dimension formula.
\end{proof}

\subsection{The orbit algebra and its structure constants}

The orbit basis also gives an orbit-theoretic description of multiplication. Regard each
\(\alpha\in\operatorname{Orb}_t(A)\) as an \(\Sp(V_A)\)-orbit in
\(\Gamma_t(A)\subseteq V_A^t\times V_A^t\). The affine basis element is
the adjacency operator \(e_\alpha\) in \eqref{eq:orbit-basis}.

For \(\alpha,\beta,\gamma\in\operatorname{Orb}_t(A)\), choose
\((\mathbf x,\mathbf y)\in\gamma\) and set
\begin{equation}\label{eq:orbit-structure-constants}
p_{\alpha,\beta}^{\gamma}
:=
\#\{\mathbf z\in V_A^t:
(\mathbf x,\mathbf z)\in\alpha,\ 
(\mathbf z,\mathbf y)\in\beta\}.
\end{equation}
This number is independent of the chosen pair
\((\mathbf x,\mathbf y)\in\gamma\): if \(S\in\Sp(V_A)\) carries one pair in
\(\gamma\) to another, then \(\mathbf z\mapsto S\mathbf z\) gives the required
bijection.

\begin{proposition}
\label{prop:orbit-structure-constants}
For \(\alpha,\beta\in\operatorname{Orb}_t(A)\), one has
\begin{equation}\label{eq:affine-orbit-multiplication}
e_\alpha e_\beta
=
\sum_{\gamma\in\operatorname{Orb}_t(A)}
p_{\alpha,\beta}^{\gamma}e_\gamma .
\end{equation}
Consequently, for \(E_\alpha=\Phi_t(e_\alpha)\),
\begin{equation}\label{eq:clifford-orbit-multiplication}
E_\alpha E_\beta
=
\sum_{\gamma\in\operatorname{Orb}_t(A)}
p_{\alpha,\beta}^{\gamma}E_\gamma .
\end{equation}
\end{proposition}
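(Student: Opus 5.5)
The plan is to compute directly with matrix units in \(\End_\C(\C[V_A^t])\) and then transport the result along \(\Phi_t\). Identify \(\C[V_A]^{\otimes t}\) with \(\C[V_A^t]\) as in Subsection~\ref{subsec:classical-commutant-explicit}. Here \(\delta_{\mathbf x}\otimes\delta_{\mathbf y}^*\) is the operator sending \(\delta_{\mathbf y}\) to \(\delta_{\mathbf x}\) and killing every other basis vector. Composition of matrix units is then the usual rule
\[
(\delta_{\mathbf x}\otimes\delta_{\mathbf z}^*)(\delta_{\mathbf z'}\otimes\delta_{\mathbf y}^*)
=\delta_{\mathbf z,\mathbf z'}\,\delta_{\mathbf x}\otimes\delta_{\mathbf y}^*.
\]

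First expand \(e_\alpha e_\beta\) as a double sum over \((\mathbf x,\mathbf z)\in\alpha\) and \((\mathbf z',\mathbf y)\in\beta\). The matrix-unit rule collapses it to
\[
e_\alpha e_\beta=\sum_{(\mathbf x,\mathbf y)}
\#\{\mathbf z:(\mathbf x,\mathbf z)\in\alpha,\ (\mathbf z,\mathbf y)\in\beta\}\,
\delta_{\mathbf x}\otimes\delta_{\mathbf y}^*.
\]
The outer sum may be restricted to pairs \((\mathbf x,\mathbf y)\in\Gamma_t(A)\). Indeed, if \((\mathbf x,\mathbf z)\in\alpha\) and \((\mathbf z,\mathbf y)\in\beta\), then \(\Sigma_t(\mathbf x)=\Sigma_t(\mathbf z)=\Sigma_t(\mathbf y)\), so any pair with a nonzero coefficient already lies in \(\Gamma_t(A)\).

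Next group the pairs in \(\Gamma_t(A)\) by their \(\Sp(V_A)\)-orbit \(\gamma\). By the independence remark stated just before the proposition, the coefficient is constant on each orbit and equals \(p^{\gamma}_{\alpha,\beta}\). Summing over each orbit therefore produces \(p_{\alpha,\beta}^\gamma e_\gamma\), which proves \eqref{eq:affine-orbit-multiplication}. Since \(V_A^t\) is finite, every count involved is finite.

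Finally, \(\Phi_t\) is an algebra isomorphism for composition by Theorem~\ref{thm:commutant-invariance}. Applying it to \eqref{eq:affine-orbit-multiplication} and using \(E_\gamma=\Phi_t(e_\gamma)\) gives \eqref{eq:clifford-orbit-multiplication} immediately.

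The only point needing care is the convention for \(\delta_{\mathbf x}\otimes\delta_{\mathbf y}^*\) as an operator, namely which index is the input. This must be made consistent with the ordering \((\mathbf x,\mathbf z)\in\alpha\), \((\mathbf z,\mathbf y)\in\beta\) in \eqref{eq:orbit-structure-constants}. With the convention fixed above, the indices match and there is no real obstacle.
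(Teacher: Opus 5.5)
Your proposal is correct and follows essentially the same argument as the paper: expand in the matrix units \(\delta_{\mathbf x}\otimes\delta_{\mathbf y}^*\), read off the coefficient of each unit as the count \(p_{\alpha,\beta}^{\gamma}\), use its constancy on \(\Sp(V_A)\)-orbits, and transport along the algebra isomorphism \(\Phi_t\). Your extra check that only pairs in \(\Gamma_t(A)\) receive nonzero coefficients is a harmless clarification that the paper leaves implicit.
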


\begin{proof}
Let \(M_{\mathbf x,\mathbf y}:=\delta_{\mathbf x}\otimes\delta_{\mathbf y}^*\).
Then
\[
M_{\mathbf x,\mathbf z}M_{\mathbf z',\mathbf y}
=
\begin{cases}
M_{\mathbf x,\mathbf y},&\mathbf z=\mathbf z',\\
0,&\mathbf z\neq\mathbf z'.
\end{cases}
\]
Therefore the coefficient of \(M_{\mathbf x,\mathbf y}\) in
\(e_\alpha e_\beta\) is precisely the number in
\eqref{eq:orbit-structure-constants}. This coefficient is constant on each
\(\Sp(V_A)\)-orbit \(\gamma\). Hence the product has the form
\eqref{eq:affine-orbit-multiplication}. The second formula follows by applying
the algebra isomorphism \(\Phi_t\).
\end{proof}

The multiplication table of the Clifford commutant, written in the
transported orbit basis, is governed by the same intersection numbers
\(p_{\alpha,\beta}^{\gamma}\). The tensor constraint changes the matrices on
the transported side, but not the structure constants. Hence the commutant is an
orbit algebra attached to the action of \(\Sp(V_A)\) on the relation
\(\Gamma_t(A)\).

We finish by recording a comparison with subgroup-supported operators that
appear in related Clifford-commutant constructions. Let
\[
L\le V_A^t\times V_A^t
\]
be a subgroup such that \(L\subseteq\Gamma_t(A)\) and \(L\) is stable under the
diagonal action of \(\Sp(V_A)\). Define
\begin{equation}\label{eq:rL-operator}
r(L):=\sum_{(\mathbf x,\mathbf y)\in L}
\delta_{\mathbf x}\otimes\delta_{\mathbf y}^*
\in \End(\C[V_A^t]).
\end{equation}
Since \(L\) is a union of orbits in \(\operatorname{Orb}_t(A)\), one has
\[
r(L)=\sum_{\alpha\subseteq L}e_\alpha
\in \mathcal C_t^{\mathrm{cl}}.
\]

Recall \(\kappa_t(\mathbf v)\) and \(u_{\mathbf v}\) from
\eqref{eq:kappa-tensor}.
The transported operator \(R(L):=\Phi_t(r(L))\in\mathcal C_t^{\mathrm{q}}\) acts by
\begin{equation}\label{eq:quantum-rL}
R(L)(u_{\mathbf y})
=
\sum_{\mathbf x:\,(\mathbf x,\mathbf y)\in L}
\frac{\kappa_t(\mathbf y)}{\kappa_t(\mathbf x)}\,u_{\mathbf x}.
\end{equation}
Indeed, the iterated tensor constraint sends
\[
u_{\mathbf v}\longmapsto \kappa_t(\mathbf v)\delta_{\mathbf v},
\]
and \(\Phi_t\) is conjugation by this diagonal operator.

When \(L\) is, in addition, maximal isotropic for the difference form
\[
\bigl((\mathbf x,\mathbf y),(\mathbf x',\mathbf y')\bigr)
\longmapsto
\prod_{i=1}^t
\frac{\omega_A(x_i,x_i')}{\omega_A(y_i,y_i')},
\]
then \(r(L)\) is the analogue, in the present equivariant endomorphism algebra,
of the stochastic Lagrangian operators of Gross--Nezami--Walter
\cite[Theorem~2.5]{GrossNezamiWalter2021}. In the usual qudit setting, the
condition \(L\subseteq\Gamma_t(A)\) is the adjoint-action version of the
stochastic condition, while maximal isotropy is the Lagrangian condition. We do
not use these operators to parametrize the commutant. The basis is the orbit
basis \(\{E_\alpha\}\), and subgroup-supported operators are distinguished sums
of its elements.
Proposition~\ref{prop:orbit-structure-constants} records the multiplication
rule for the whole commutant.

\bibliographystyle{amsalpha}
\bibliography{references}

\end{document}